\documentclass[11pt,a4paper]{article}
\usepackage[T1]{fontenc}
\usepackage[utf8]{inputenc}
\usepackage[english]{babel}
\usepackage[margin=2.6cm]{geometry}
\usepackage{amsmath,amssymb,amsthm,mathtools}
\usepackage{microtype}
\usepackage[hidelinks]{hyperref}
\newcommand{\E}{\mathbb E}
\newcommand{\PP}{\mathbb P}
\newcommand{\R}{\mathbb R}
\newcommand{\ind}{\mathbf 1}
\newcommand{\cB}{\mathcal B}
\newcommand{\KL}{D}

\newtheorem{theorem}{Theorem}[section]
\newtheorem{proposition}[theorem]{Proposition}
\newtheorem{lemma}[theorem]{Lemma}
\newtheorem{corollary}[theorem]{Corollary}
\theoremstyle{remark}

\numberwithin{equation}{section}
\title{Moment estimates for chaoses with regular moment growth}
\author{Rafa{\l} Meller\footnote{Institute of Mathematics, University of Warsaw, Banacha 2, 02-097 Warsaw, Poland.}}
\date{}

\begin{document}
\maketitle
\begin{abstract}
We give two-sided moment estimates for decoupled chaoses of any order
generated by independent symmetric random variables with regular moment
growth: $\|X\|_{2p}\le K\|X\|_p$ for every $p\ge1$.
The estimates involve only the coefficient tensor and the one-dimensional
tails. They use the partition norms of Adamczak and Lata{\l}a, with
constants depending only on the order and on $K$.
The proof starts with log-concave tails. Gaussian smoothing gives the
estimate for the expected norm needed for induction, and a product comparison
reduces the remaining distributions to this case.
The extra tensor indices introduced by the comparison are removed using
Rademacher moments. A comparison with products of log-concave-tail variables then extends the formula
to regular moment growth. We also give tail bounds and explicit
Rademacher and Weibull formulas for $r\ge1$, including fourth-order examples.
This article was developed with the assistance of GPT Astra-6.
\end{abstract}

\section{Introduction and main results}\label{sec:main}

Let $A=(a_{\mathbf i})=(a_{i_1,\ldots,i_d})$ be a finite real array and let
\begin{equation}\label{eq:chaos}
 S_A(X)=\sum_{\mathbf i}a_{\mathbf i}X_{i_1}^1\cdots X_{i_d}^d,
\end{equation}
where all variables $(X_i^j)_{i\le n,j\le d}$ are independent and symmetric.
The superscript $j$ labels an independent family: even when $i_j=i_k$,
the variables $X_{i_j}^j$ and $X_{i_k}^k$ are independent for $j\ne k$.
We seek a formula for $\|S_A(X)\|_p$ in terms of $A$ and the tails
of the individual variables, with constants independent of $n$ and $p$.
This article was developed with the assistance of GPT Astra-6, including
the exploration of arguments and preparation of the text.

Lata{\l}a \cite{Lat06} obtained such formulas for Gaussian chaoses of
arbitrary order. Adamczak and Lata{\l}a \cite{AL12} introduced the
tail-dependent partition norms used below. They proved the lower bound
in every order, the upper bound through order three, and arbitrary-order
estimates for exponential variables.
In \cite{Mel19}, quadratic chaoses were treated under the more general
condition $\|X\|_{2p}\le K\|X\|_p$.
We prove the upper bound for log-concave tails in every order and then
extend the same formula to this moment-growth condition.
The extension uses the product comparison and multiplier argument in
\cite[Lemmas 3.2, 3.3 and 3.9]{Mel19}.

\subsection{The norms in the moment formula}
Write $[d]=\{1,\ldots,d\}$ and let $\mathcal P_d$ be its partitions into
nonempty blocks. For $I\subseteq[d]$, write $\mathbf i_I=(i_j)_{j\in I}$.
We use the notation of \cite{AL12,Mel19}:
\begin{equation}\label{eq:modified-tail}
 N_i^j(t)=-\log\PP(|X_i^j|\ge |t|),\qquad
 \widehat N_i^j(t)=
 \begin{cases}t^2,&|t|\le1,\\N_i^j(t),&|t|>1.\end{cases}
\end{equation}
The quadratic part describes small fluctuations; beyond one, the
constraint retains the tail of the actual variable.
The corresponding norm of a vector is
\[
 \|b\|_{X^j,p}=\sup\left\{\sum_i b_i x_i:
                              \sum_i\widehat N_i^j(x_i)\le p\right\}.
\]

For a tensor, group the indices into blocks.
Within each block choose one index along which to apply the tail
constraint, and take a Euclidean norm in the remaining indices.
Sum over the choices of the distinguished indices.
More precisely, for $\mathcal J=\{I_1,\ldots,I_k\}\in\mathcal P_d$, define
\begin{equation}\label{eq:partition-norm}
 \begin{split}
 \|A\|_{\mathcal J,p}^{N}
 =\sum_{s_1\in I_1,\ldots,s_k\in I_k}
 \sup\bigg\{&\sum_{\mathbf i}a_{\mathbf i}
                         \prod_{l=1}^k x^l_{\mathbf i_{I_l}}:\\
 &\sum_{i_{s_l}}\widehat N_{i_{s_l}}^{s_l}
       \left(\left\|(x^l_{\mathbf i_{I_l}})
                    _{\mathbf i_{I_l\setminus\{s_l\}}}\right\|_2\right)
                    \le p,\quad l=1,\ldots,k\bigg\}.
 \end{split}
\end{equation}
This is precisely \cite[equation (4)]{AL12}.
For a singleton block the inner Euclidean norm is an absolute value.
The coefficient array need not be symmetric or vanish on diagonals,
since \eqref{eq:chaos} is decoupled.
The notation $\asymp_{d,K}$ means comparison in both directions by
positive constants depending only on $d,K$; we omit $K$ when it is absent.

\subsection{Two moment theorems}

\begin{theorem}[Log-concave tails]\label{thm:main}
Suppose that all $N_i^j$ are convex on $[0,\infty)$ and normalized by
\begin{equation}\label{eq:normalization}
                    \inf\{t\ge0:N_i^j(t)\ge1\}=1.
\end{equation}
Then, for every real array $A$, $d\ge1$ and $p\ge2$,
\begin{equation}\label{eq:general-natural-moment-formula}
 \|S_A(X)\|_p\asymp_d
                   \sum_{\mathcal J\in\mathcal P_d}\|A\|_{\mathcal J,p}^{N}.
\end{equation}
\end{theorem}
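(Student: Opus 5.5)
The lower bound $\|S_A(X)\|_p\gtrsim_d\sum_{\mathcal J}\|A\|^N_{\mathcal J,p}$ is already proved in \cite{AL12} for every order, so the whole task is the upper bound. We argue by induction on $d$. The case $d=1$ is the Gluskin--Kwapie\'n formula for sums $\sum_i a_iX_i$ with log-concave tails. For $d\ge2$ we condition on $(X^d_i)_i$ and apply the order-$(d-1)$ estimate to the tensor $A_x=(\sum_{i_d}a_{\mathbf i}x_{i_d})$. This gives
\[
\|S_A(X)\|_p\lesssim_d \Big\|\sum_{\mathcal J'\in\mathcal P_{d-1}}\|A_{X^d}\|^N_{\mathcal J',p}\Big\|_p .
\]
Each $\|A_x\|^N_{\mathcal J',p}$ is a supremum of linear forms in $x$, that is, a norm of $x$. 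So we need moment estimates for suprema of linear combinations of one log-concave-tail family. We split each such $\|\,\cdot\,\|_p$ into an expectation part plus a weak-moment part:
\[
\|\sup_{t\in T}\langle t,X^d\rangle\|_p\lesssim \E\sup_{t\in T}\langle t,X^d\rangle+\sup_{t\in T}\|\langle t,X^d\rangle\|_p .
\]
For log-concave tails this holds by Lata{\l}a's result on weak and strong moments, or by the comparison with exponential variables in \cite{AL12}. The weak part is handled by the one-dimensional formula. It produces exactly the terms in which $d$ is adjoined to one block of $\mathcal J'$ or forms a singleton. This bookkeeping matches the definition \eqref{eq:partition-norm}, where the sum over the distinguished index $s_l$ accounts for which coordinate carries the tail constraint.

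The main obstacle is the expectation term $\E\|A_{X^d}\|^N_{\mathcal J',p}$. We must bound it by the partition norms of $A$ at level $p$, with constants independent of $n$. I would first treat the case of exponential-type tails, where \cite{AL12} already gives the arbitrary-order estimate, and then reduce to it. The reduction is a product comparison. A symmetric variable with convex $N$ normalized as in \eqref{eq:normalization} is comparable, in the sense of moments of norms, to a product $g\cdot Y$ with $g$ Gaussian and $Y$ independent. Equivalently, we can add a Gaussian smoothing $X+\varepsilon g$ without changing the norms $\|\cdot\|_{X,p}$ up to constants, because $\widehat N(t)=t^2$ for $|t|\le1$. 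With the Gaussian factor present, the supremum over $x$ can be estimated by Gaussian chaining and Sudakov/dual-Sudakov entropy bounds in the style of \cite{Lat06}. Applied with the induction hypothesis, this yields
\[
\E\|A_{X^d}\|^N_{\mathcal J',p}\lesssim_d\sum_{\mathcal J}\|A\|^N_{\mathcal J,p}.
\]
The Gaussian smoothing step is where I expect the real difficulty, namely controlling entropy numbers of the unit balls $\{x:\sum\widehat N(x_i)\le p\}$ uniformly.

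The product comparison introduces extra tensor indices, since writing $X=\varepsilon\cdot|X|$ or $X\approx gY$ doubles coordinates. We remove them using Rademacher moments. Conditionally on the magnitudes, the chaos is a Rademacher chaos, and its partition norms are the Euclidean-type norms with constraint $\sum_i\min(x_i^2,|x_i|)\le p$-type bounds. Taking expectation over the magnitudes with Jensen/Kahane-type contraction then collapses the auxiliary indices back to the original tensor. This gives the norms $\|A\|^N_{\mathcal J,p}$ again, and closes the induction.
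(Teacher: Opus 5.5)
Your outer frame is the paper's: the lower bound from \cite{AL12}, induction by conditioning on the last family, and Lata{\l}a's weak--strong inequality \eqref{eq:strong-weak}. But the step you call the main obstacle, $\E\|A_{X^d}\|^N_{\mathcal J',p}\lesssim_d\|A\|^N_p$, is not proved. ``Gaussian chaining and Sudakov/dual-Sudakov bounds'' over products of the non-Euclidean bodies $\cB^N_{I,s}(p)$, with the entropy estimates left open, is exactly the unresolved part of the problem; \cite{AL12} obtains the upper bound only through order three.

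The missing idea is to avoid chaining altogether. The paper first treats tails $e^{-N(t^2)}$, whose modified exponent satisfies $\widehat Q(t)\ge t^2$. Hence every admissible block lies in $\sqrt p\,B_2^I$, and shifting a standard Gaussian measure on the \emph{parameter} space to an admissible point costs $\KL\le kp/2$. The entropy inequality \eqref{eq:entropy} is applied to $\lambda\sum_\ell Y^m_\ell h_\ell(\theta)-\sum_\ell\psi_\ell(\lambda h_\ell(\theta))$ before the supremum is taken. It leaves the penalty $\E_{\rho_x}\sum_\ell\psi_\ell(\lambda h_\ell)\le C\sum_\ell\phi_\ell^*(\lambda\|h_\ell\|_{L_2(\rho_x)})$, which rests on the concavity of $q\mapsto\phi^*(\sqrt q)$ (Lemma~\ref{lem:root-mgf}). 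Expanding $\|h_\ell\|^2_{L_2(\rho_x)}$ over subfamilies $\mathcal R$ produces exactly the merged blocks $\mathcal J_{\mathcal R}$. The choice $\lambda=p/M_0$ from Lemma~\ref{lem:dual-budget} then makes the penalty $O(p)$ uniformly in $x$. So merged blocks come from the mean term, possibly joining several blocks to the new index at once; the weak term only adds the singleton $\{d\}$.

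Your reduction also fails as stated. A comparison $X\approx g\cdot Y$ with $g$ Gaussian is impossible whenever $\|X\|_p=o(\sqrt p)$, as for random signs or Weibull tails with $r>2$, both in the class. Indeed $\|gY\|_p\ge c\sqrt p\,\|Y\|_p$ would force $Y=0$. Likewise, for signs $\|X_1+\varepsilon g_1\|_p\asymp1+\varepsilon\sqrt p$ while $\|X_1\|_p=1$, so adding $\varepsilon g$ does change the norms. Dominating by exponential variables only yields the exponential formula, which is too large: for $d=1$, $A=e_1$ and Gaussian $X$ it gives $p$ instead of $\sqrt p$. The paper's comparison goes the other way and needs only one side: $|X|\le_{\mathrm{st}}2|Y^0Y^1|$ with $Y^0,Y^1$ in the squared-argument class, since $e^{-2N(t)}\ge e^{-N(2t)}$.

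The index collapse is also not a matter of conditioning on magnitudes. Partitions of the lifted indices cross the pairs $\{(j,0),(j,1)\}$. Moreover, the sign body is $\{\|x\|_2\le\sqrt p,\ \|R_{I,s}(x)\|_\infty\le1\}$; the constraint $\sum_i\min(x_i^2,|x_i|)\le p$ you wrote is the exponential body. The paper splits each block into short rows and normalized long rows, both sign-admissible. It keeps the long row lengths as multipliers, whose products obey the original budget via $N(uv)\le\frac12N(u^2)+\frac12N(v^2)$. It then uses the Rademacher lower bound in order $2d$, the fact that products of independent signs are independent signs, and the Rademacher upper bound after freezing the signs that carry multipliers. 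That Rademacher upper bound in every order is itself a special case of the squared-argument theorem, so it too depends on the step you left open.
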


\begin{theorem}[Regular growth of moments]\label{thm:regular}
The conclusion of the preceding theorem also holds for independent
symmetric variables satisfying
\begin{equation}\label{eq:regular-growth}
 \|X_i^j\|_{2p}\le K\|X_i^j\|_p\quad(p\ge1),\qquad
                         \|X_i^j\|_2=1/e ,
\end{equation}
with constants depending only on $d,K$:
\begin{equation}\label{eq:regular-moments}
 \|S_A(X)\|_p\asymp_{d,K}
                   \sum_{\mathcal J\in\mathcal P_d}\|A\|_{\mathcal J,p}^{N},
 \qquad p\ge2.
\end{equation}
All moments are assumed finite. The functions $N_i^j$ in this formula
are the actual tail exponents of $X_i^j$ and need not be convex.
\end{theorem}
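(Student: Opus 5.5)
We derive Theorem~\ref{thm:regular} from Theorem~\ref{thm:main}, following the scheme of \cite{Mel19} for $d=2$. The lower bound needs no convexity: the argument of Adamczak and Lata{\l}a \cite{AL12} gives $\|S_A(X)\|_p\gtrsim_d\sum_{\mathcal J}\|A\|^N_{\mathcal J,p}$ for arbitrary symmetric variables with the normalization $\|X_i^j\|_2=1/e$. That argument tests the chaos against suitable directions and uses the one-dimensional lower bound $\|\sum_i b_iX_i\|_p\gtrsim\|b\|_{X,p}$. Under \eqref{eq:regular-growth} the one-dimensional bound holds with constants depending on $K$, via the Lata{\l}a formula for sums of independent symmetric variables. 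The main work is therefore the upper bound.

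\textbf{Product comparison.} By \cite[Lemmas 3.2, 3.3]{Mel19}, each $X_i^j$ with regular moment growth is dominated in moments by a finite product of independent variables with log-concave tails. More precisely, there are $m=m(K)$ and independent symmetric $Y_i^{j,1},\ldots,Y_i^{j,m}$, all with convex $N$ normalized as in \eqref{eq:normalization}, such that
\[
\|X_i^j\|_q\le C(K)\Big\|\prod_{r\le m}Y_i^{j,r}\Big\|_q\qquad(q\ge1).
\]
This one-variable moment domination must be transferred to the chaos. I would do it by conditioning coordinate by coordinate: for fixed other variables, $S_A(X)$ is a linear form in $(X_i^j)_i$. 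Moment domination for each variable then gives domination of the linear form, using symmetrization and a contraction/comparison principle for sums of independent symmetric variables with dominated moments at all orders.

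\textbf{Reduction to Theorem~\ref{thm:main}.} Next I would view $\sum_{\mathbf i}a_{\mathbf i}\prod_j\prod_r Y^{j,r}_{i_j}$ as a chaos of order $dm$ with coefficient tensor $\tilde A$. This tensor is obtained from $A$ by repeating each index $m$ times, so it is supported on the generalized diagonal. Since the chaos is decoupled, Theorem~\ref{thm:main} applies to it in order $dm$ and bounds its $p$-th moment by $\sum_{\mathcal K\in\mathcal P_{dm}}\|\tilde A\|^{N_Y}_{\mathcal K,p}$. The extra indices are removed as in the multiplier argument of \cite[Lemma 3.9]{Mel19}. Each partition norm of the diagonal tensor, for the product tails, is bounded by $C(d,K)\sum_{\mathcal J\in\mathcal P_d}\|A\|^N_{\mathcal J,p}$, where $N$ are the true tails of the $X$'s.

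\textbf{Main obstacle.} I expect this last comparison of norms to be the hardest step. One has to show that a constraint $\sum_i\widehat N_i^{Y}(\cdot)\le p$ imposed on several copies of the same index can be traded for the constraint coming from the tail of the product. The tail of the product is in turn comparable to the tail of $X_i^j$, and this must hold uniformly in the block structure. My first attempt would be induction on $m$, merging one repeated index at a time. At each merge I would use the fact that $\|\prod_r Y^{j,r}\|_q$ is comparable, up to $C(K)$, to $\|X_i^j\|_q$, together with the one-dimensional formula $\|b\|_{X,p}\asymp\|\sum b_iX_i\|_p$. With this, Euclidean parts of the merged block become tail constraints, and these cost at most constants depending on $d,K$.
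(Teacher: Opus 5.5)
Your architecture is the paper's. You take the lower bound from the argument of \cite{AL12} with the one-dimensional estimate of \cite{Mel19}. You take the product comparison from \cite[Lemmas 3.2, 3.3]{Mel19}; your moment-domination route for transferring it to the chaos is fine, e.g.\ via Lata{\l}a's formula for sums of independent symmetric variables. And you apply Theorem~\ref{thm:main} in order $dm$ to the diagonal lift $A^{[m]}$. The gap is exactly the step you flag, the deterministic bound
\[
 \sum_{\mathcal Q\in\mathcal P_{md}}\|A^{[m]}\|^{M}_{\mathcal Q,p}
 \le C_{d,K}\sum_{\mathcal J\in\mathcal P_d}\|A\|^{N}_{\mathcal J,p}.
\]
This is the real content of the proof, and the route you sketch does not deliver it. A partition $\mathcal Q$ of $[d]\times[m]$ may put the copies of one coordinate into different blocks, each also containing copies of other coordinates and distinguished elsewhere. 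On the diagonal support the contraction then becomes an entrywise product of block parameters. For $d=m=2$ and $\mathcal Q=\{\{(1,1),(2,1)\},\{(1,2),(2,2)\}\}$ distinguished at $(1,1)$ and $(2,2)$, it is $\sum_{i,k}a_{ik}x_{ik}y_{ik}$. Here $x$ carries a tail constraint on its rows (in $i$) and $y$ on its columns (in $k$). The array $x\circ y$ is not directly an admissible parameter for either partition of $[2]$. The two row constraints refer to different indices, so they combine neither into the constraint of one block nor into a product of singleton constraints. The one-dimensional formula and the comparison $\|\prod_\nu Y^{j,\nu}_i\|_q\asymp\|X^j_i\|_q$ only carry information about vectors. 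So ``merging one repeated index at a time'' gives no mechanism for turning such mixed multi-index objects into original partition norms.

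The missing idea is to avoid matching $\mathcal Q$ with a partition of $[d]$ altogether.
\begin{itemize}
\item Split each block parameter by rows, $x^B=v^{B,0}+u^Bv^{B,1}$, where $v^{B,0}$ keeps the rows of length at most one and $v^{B,1}$ normalizes the longer ones. Both pieces lie in $C^\varepsilon_{B,s_B}(p)$; for $v^{B,1}$ this holds because at most $p$ rows are long. The lengths $u^B_i\in\{0\}\cup(1,\infty)$ satisfy $\sum_iM_i(u^B_i)\le p$.
\item Expand the product over blocks. Let $J$ be the set of coordinates receiving a multiplier, and $w^j$ the product of the at most $m$ multipliers on coordinate $j$. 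Each term is then a contraction of $(A^w)^{[m]}$, with $a^w_{\mathbf i}=a_{\mathbf i}\prod_{j\in J}w^j_{i_j}$, against Rademacher-admissible blocks.
\item The sign case of \eqref{eq:known-lower} in order $md$ bounds each term by $\|\sum_{\mathbf i}a^w_{\mathbf i}\prod_{j,\nu}\varepsilon^{j,\nu}_{i_j}\|_p$. On the diagonal the products $\prod_\nu\varepsilon^{j,\nu}_i$ are again independent signs, so the duplicated indices disappear whatever the shape of $\mathcal Q$.
\item Freeze the signs in the coordinates of $J$ and apply the sign case of Theorem~\ref{thm:main} to the remaining families, using $C^\varepsilon_{I,s}(p)\subseteq\cB^N_{I,s}(p)$. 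This reduces everything to the scalar claim $w^j\in C_m\operatorname{conv}\{z:\sum_i\widehat N^j_i(z_i)\le p\}$.
\end{itemize}
The paper proves the scalar claim from the arithmetic--geometric mean inequality and the tail comparison $M(t)\le N(t^m)\le M(C_0t)$, $t\ge t_0$. This scalar step could alternatively be derived from the tools you name: the all-singleton case of \eqref{eq:known-lower} for the diagonal vector in order $q\le m$, the reverse product comparison, and the one-dimensional formula.

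A smaller point on the lower bound: the argument of \cite{AL12} also uses $\E|S_B(X)|\gtrsim\|B\|_2$ for lower-order chaoses. This comes from $\E|X_i^j|\ge\|X_i^j\|_2/K$, which regular growth supplies and the normalization alone does not. So your claim for ``arbitrary symmetric variables'' is more than that argument gives.
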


The normalizations in these two statements are deliberately different:
\eqref{eq:normalization} follows \cite{AL12}, while
$\|X\|_2=1/e$ follows \cite{Mel19}.
A variable satisfying the moment-growth condition may have a large atom
at zero, in which case the quantile in \eqref{eq:normalization} can vanish.
For arbitrary nondegenerate variables in Theorem~\ref{thm:regular}, apply
the formula to $X_i^j/(e\|X_i^j\|_2)$ and replace $a_{\mathbf i}$ by
$a_{\mathbf i}\prod_j e\|X_{i_j}^j\|_2$.
Thus no restriction on the individual scales is imposed.

The lower bound in Theorem~\ref{thm:main} is
\cite[Theorem 3.1]{AL12}. Its upper bound is proved in
Sections~\ref{sec:root-class}--\ref{sec:diagonal-collapse}.
The proof of Theorem~\ref{thm:regular} follows immediately afterwards,
in Section~\ref{sec:regular}.
In particular, the Rademacher estimate is established before it is used
in either product reduction.

\subsection{How to read the formula}

For $d=1$, the formula is the scalar estimate
$\|\sum_i a_iX_i\|_p\asymp\|(a_i)\|_{X,p}$.
For $d=2$, the two partitions are $\{\{1,2\}\}$ and
$\{\{1\},\{2\}\}$. Thus, under the assumptions of either theorem,
\[
 \begin{aligned}
 \left\|\sum_{i,j}a_{ij}X_i^1X_j^2\right\|_p
 \asymp{}&\|A\|_{X^1,X^2,p}\\
 &+\left\|\left(\sqrt{\sum_j a_{ij}^2}\right)_i\right\|_{X^1,p}
 +\left\|\left(\sqrt{\sum_i a_{ij}^2}\right)_j\right\|_{X^2,p}.
 \end{aligned}
\]
where
\[
 \|A\|_{X^1,X^2,p}
 =\sup\left\{\sum_{i,j}a_{ij}u_iv_j:
       \sum_i\widehat N_i^1(u_i)\le p,\
       \sum_j\widehat N_j^2(v_j)\le p\right\}.
\]
The constants have the dependence specified in the relevant theorem.
The two singleton blocks give the bilinear supremum.
The single block gives the norms of the vectors of row and column lengths,
by Euclidean duality. This is the quadratic formula of
\cite[Theorem 2.1]{Mel19}.
In higher order the partitions organize these same two operations:
grouping indices in Euclidean norm and retaining separate tail constraints.

\subsection{Rademacher and Weibull consequences}

For an array $x$ indexed by a block $I$ and $s\in I$, abbreviate its
vector of Euclidean row lengths by
\[
 R_{I,s}(x)=
 \left(\left\|(x_{\mathbf i_I})_{\mathbf i_{I\setminus\{s\}}}\right\|_2
                                                        \right)_{i_s}.
\]
The admissible set for a block $I$ and $s\in I$ is
\begin{equation}\label{eq:block-body}
 \cB^N_{I,s}(p)=\left\{x:
 \sum_{i_s}\widehat N_{i_s}^s
 \left(\left\|(x_{\mathbf i_I})_{\mathbf i_{I\setminus\{s\}}}\right\|_2\right)
 \le p\right\}.
\end{equation}
For a family of symmetric block sets $C=(C_{I,s}(p))$, the notation
$\|A\|_{\mathcal J,p}^{C}$ means the expression
\eqref{eq:partition-norm} with $x^l\in C_{I_l,s_l}(p)$ as its constraints.

\begin{corollary}[Rademacher moments]\label{thm:rademacher}
Let $(\varepsilon_i^j)$ be independent random signs. Set
\[
 C^\varepsilon_{I,s}(p)=
 \{x:\|x\|_2\le\sqrt p,\ \|R_{I,s}(x)\|_\infty\le1\}.
\]
For every $p\ge2$,
\begin{equation}\label{eq:bernoulli-moments}
 \|S_A(\varepsilon)\|_p\asymp_d
           \sum_{\mathcal J\in\mathcal P_d}\|A\|_{\mathcal J,p}^{\varepsilon},
\end{equation}
where the superscript $\varepsilon$ refers to these block sets.
\end{corollary}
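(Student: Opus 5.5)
The plan is to read Corollary~\ref{thm:rademacher} as the special case of Theorem~\ref{thm:main} in which every $X_i^j$ is a random sign. I would not go through Theorem~\ref{thm:regular}: the text says the Rademacher estimate is needed inside its proof, so using it here would be circular.

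\emph{Identifying the admissible sets.} For $\varepsilon$ we have $\PP(|\varepsilon|\ge t)=1$ for $0\le t\le 1$ and $=0$ for $t>1$. Hence $N(t)=0$ on $[0,1]$ and $N(t)=+\infty$ on $(1,\infty)$. This is a convex extended-valued function with $\inf\{t\ge0:N(t)\ge1\}=1$, so \eqref{eq:normalization} holds. The modified tail is $\widehat N(t)=t^2$ for $|t|\le1$ and $\widehat N(t)=+\infty$ for $|t|>1$. Consequently the condition $\sum_{i_s}\widehat N\bigl(R_{I,s}(x)_{i_s}\bigr)\le p$ says exactly two things: every row length satisfies $R_{I,s}(x)_{i_s}\le1$, and $\sum_{i_s}R_{I,s}(x)_{i_s}^2=\|x\|_2^2\le p$. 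Thus $\cB^N_{I,s}(p)=C^\varepsilon_{I,s}(p)$, and each partition norm $\|A\|^N_{\mathcal J,p}$ coincides with $\|A\|^\varepsilon_{\mathcal J,p}$. Formula \eqref{eq:bernoulli-moments} is then literally \eqref{eq:general-natural-moment-formula}.

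\emph{Main obstacle: finiteness of $N$.} If the proof of Theorem~\ref{thm:main} tacitly uses finite $N$, I would approximate. Take symmetric $Y^{(k)}$ with $N_k(t)=k(t-1)_+$. These are convex and satisfy \eqref{eq:normalization}, and $|Y^{(k)}|=1+E/k$ in law with $E$ standard exponential. Independent copies $Y^{(k),j}_i$ converge to independent signs in $L^p$, so $\|S_A(Y^{(k)})\|_p\to\|S_A(\varepsilon)\|_p$ for fixed finite $A$ and $p$. On the other side, set $\widehat N_k(t)=t^2$ for $|t|\le1$ and $\widehat N_k(t)=k(|t|-1)$ for $|t|>1$. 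These increase pointwise to the $\widehat N$ above, so the constraint sets decrease to $C^\varepsilon_{I,s}(p)$. By compactness in finite dimensions, each supremum in \eqref{eq:partition-norm} decreases to the supremum over $C^\varepsilon$.

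The monotone-limit step is the one to check carefully. A maximizer $x^{(k)}$ satisfies $\sum_{i_s}\widehat N_k(R_{I,s}(x^{(k)})_{i_s})\le p$, so its row lengths are at most $1+p/k$ and $\|x^{(k)}\|_2^2\le p+O(1/k)$. Rescaling $x^{(k)}$ by a factor $1-O(1/k)$ puts it in $C^\varepsilon_{I,s}(p)$. Since the functional is multilinear in the blocks, this costs a factor $1-O(1/k)$ in the value, which gives convergence of the partition norms. Because the constants in Theorem~\ref{thm:main} depend only on $d$, passing to the limit $k\to\infty$ yields \eqref{eq:bernoulli-moments} with the same constants.
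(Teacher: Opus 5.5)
Your identification $\cB^N_{I,s}(p)=C^\varepsilon_{I,s}(p)$ is correct. Deducing \eqref{eq:bernoulli-moments} from Theorem~\ref{thm:main} is legitimate; Section~\ref{sec:examples} remarks exactly this. It is not, however, how the paper proves the corollary, and your reason for preferring Theorem~\ref{thm:main} to Theorem~\ref{thm:regular} is mistaken. The proof of Theorem~\ref{thm:main} also uses the Rademacher upper bound: Step~4 of Proposition~\ref{prop:diagonal-collapse} applies it in order $|L|$, and $|L|=d$ when no multiplier is selected. So your circularity objection applies verbatim to Theorem~\ref{thm:main}. What makes either deduction non-circular is Theorem~\ref{thm:root-moments}, and that is the paper's actual proof. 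For signs $N(t^2)=N(t)$, so $\varepsilon$ itself lies in the squared-argument class of Section~\ref{sec:root-class}, with $\widehat Q=\widehat N$ and $\cB^Q_{I,s}(p)=C^\varepsilon_{I,s}(p)$. The Gaussian-smoothing induction then gives the upper bound, and \eqref{eq:known-lower} gives the lower bound. Once you cite this, routing through Theorem~\ref{thm:main}, which itself rests on it, adds nothing.

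Your fallback is unnecessary, since the paper explicitly allows $N=+\infty$ (Section~\ref{sec:preliminaries}, and the sentence on random signs in Section~\ref{sec:root-class}). As written it is also wrong. The function $N_k(t)=k(t-1)_+$ reaches $1$ only at $t=1+1/k$, so \eqref{eq:normalization} fails and Theorem~\ref{thm:main} does not apply to it. Worse, $\widehat N_k$ drops from $1$ at $t=1$ to nearly $0$ just above it. With $n>p$ rows of length $1+p/(nk)$ the budget is met while $\|x\|_2^2\approx n$, so your bound $\|x^{(k)}\|_2^2\le p+O(1/k)$ is false and the bodies are not closed. The suprema then do not converge to those over $C^\varepsilon_{I,s}(p)$: for $d=1$ and $A=(1,\ldots,1)$ the limit is at least $n$, not $\sqrt{pn}$. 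Taking instead $N_k(t)=(1+k(t-1))_+$, i.e.\ $|Y^{(k)}|=1-1/k+E/k$, repairs this. Then $\widehat N_k(t)=1+k(|t|-1)\ge t^2$ on every admissible row once $k\ge p+2$, so admissible arrays have $\|x\|_2^2\le p$ and rows at most $1+p/k$. Your rescaling by $(1+p/k)^{-1}$ then goes through.
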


Write $B_2^I$ for the Euclidean unit ball of arrays indexed by $I$, and
\[
 \|x\|_{r,2;s}=\|R_{I,s}(x)\|_r,\qquad
 B_{r,2}^{I,s}=\{x:\|x\|_{r,2;s}\le1\}\quad(r\ge1).
\]
We also recall the Gaussian partition norms
\[
 \|A\|_{\mathcal J}=
 \sup_{\|x^l\|_2\le1}
                 \sum_{\mathbf i}a_{\mathbf i}\prod_{l=1}^k
                                                    x^l_{\mathbf i_{I_l}}.
\]

\begin{corollary}[Weibull moments]\label{thm:weibull}
Let the independent symmetric variables in \eqref{eq:chaos} have tails
$\PP(|X_i^j|\ge t)=\exp(-t^r)$, $t\ge0$, for a fixed $1\le r<\infty$.
For $p\ge2$, set
\begin{equation}\label{eq:weibull-balls}
 C^{(r)}_{I,s}(p)=
 \begin{cases}
 \sqrt p B_2^I+p^{1/r}B_{r,2}^{I,s},&1\le r<2,\\
 \sqrt p B_2^I\cap p^{1/r}B_{r,2}^{I,s},&2\le r<\infty .
 \end{cases}
\end{equation}
Then, with constants uniform in $r\ge1$,
\begin{equation}\label{eq:weibull-moments}
 \|S_A(X)\|_p\asymp_d
          \sum_{\mathcal J\in\mathcal P_d}\|A\|_{\mathcal J,p}^{(r)},
\end{equation}
where the superscript $(r)$ refers to the sets
\eqref{eq:weibull-balls}.
\end{corollary}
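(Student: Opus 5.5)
We will derive Corollary~\ref{thm:weibull} from Theorem~\ref{thm:main}. For $r\ge1$ the tail exponent $N(t)=t^r$ is convex on $[0,\infty)$. It also satisfies the normalization \eqref{eq:normalization}, since $\inf\{t:t^r\ge1\}=1$. Hence $\|S_A(X)\|_p\asymp_d\sum_{\mathcal J}\|A\|^N_{\mathcal J,p}$ with constants depending only on $d$, and in particular uniform in $r$. It therefore suffices to show that, for each block $I$ and each $s\in I$, the admissible set $\cB^N_{I,s}(p)$ is comparable to $C^{(r)}_{I,s}(p)$ up to dilation by absolute constants:
\[
c\,C^{(r)}_{I,s}(p)\subseteq\cB^N_{I,s}(p)\subseteq C\,C^{(r)}_{I,s}(p).
\]
The partition norms $\|A\|^C_{\mathcal J,p}$ are suprema of multilinear forms over products of these sets. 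Dilating each of the at most $d$ factors by a constant changes the norm by at most $C^d$, so the corollary follows.

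Both sets depend on $x$ only through the row-length vector $y=R_{I,s}(x)\in\R_+^{n}$. Indeed, $\|x\|_2=\|y\|_2$, $\|x\|_{r,2;s}=\|y\|_r$, and the constraint in \eqref{eq:block-body} reads $\sum_i \widehat N(y_i)\le p$ with $\widehat N(t)=t^2\ind_{t\le1}+t^r\ind_{t>1}$. The problem thus reduces to a one-dimensional comparison on nonnegative vectors $y$. For each $y$ we split the coordinates into small ones ($y_i\le1$) and large ones ($y_i>1$), and write $x=x'+x''$ accordingly, splitting along rows indexed by $i_s$. Since $\widehat N(y_i)=y_i^2$ on the small coordinates and $y_i^r$ on the large ones, the condition $\sum_i\widehat N(y_i)\le p$ says that the small part has $\ell_2$-mass at most $p$ and the large part has $\ell_r$-mass at most $p$.

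\emph{Case $2\le r<\infty$.} If $y$ is admissible, then $\|y\|_2^2\le\sum_i \widehat N(y_i)\le p$, because $t^2\le t^r$ for $t\ge1$. Moreover $\|y\|_r^r\le\sum_i\widehat N(y_i)\le p$, because $t^r\le t^2$ for $t\le1$. This gives $\cB^N\subseteq\sqrt pB_2\cap p^{1/r}B_{r,2}$. Conversely, if $\|y\|_2^2\le p$ and $\|y\|_r^r\le p$, then $\sum_i\widehat N(y_i)\le\|y\|_2^2+\|y\|_r^r\le 2p$. Convexity of $\widehat N$ together with $\widehat N(0)=0$ gives $\widehat N(t/2)\le\widehat N(t)/2$, so $\tfrac12 C^{(r)}\subseteq\cB^N$.

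\emph{Case $1\le r<2$.} If $y$ is admissible, the decomposition gives $x'\in\sqrt pB_2$ and $x''\in p^{1/r}B_{r,2}$, so $\cB^N\subseteq C^{(r)}$. For the converse, let $x=u+v$ with $\|u\|_2\le\sqrt p$ and $\|v\|_{r,2;s}\le p^{1/r}$. The row lengths satisfy $y_i\le a_i+b_i$, where $a=R(u)$ and $b=R(v)$. Using convexity and monotonicity of $\widehat N$,
\[
\widehat N\bigl(\tfrac{y_i}{2}\bigr)\le\tfrac12\widehat N(a_i)+\tfrac12\widehat N(b_i).
\]
Here $\widehat N(t)\le t^2$ for all $t$ when $r<2$, and $\widehat N(t)\le t^r$ for all $t$ when $r\ge1$, because $t^2\le t^r$ on $[0,1]$. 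Hence $\sum_i\widehat N(y_i/2)\le p$, that is, $\tfrac12 C^{(r)}\subseteq\cB^N$.

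The main obstacle is to keep every constant absolute and independent of $r$. The only places where $r$ enters are the elementary inequalities $t^2\lessgtr t^r$ on $[0,1]$ and $[1,\infty)$ and the convexity used for halving, and all of these hold with constant $1/2$ uniformly in $r\ge1$. One should also check that the statement of Theorem~\ref{thm:main} genuinely gives constants depending only on $d$, and not on the particular family of tails. This is what permits uniformity in $r$.
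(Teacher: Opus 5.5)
Your overall route is the paper's: apply Theorem~\ref{thm:main} (which applies since $N(t)=t^r$ is convex and normalized, with constants depending only on $d$), then compare $\cB^N_{I,s}(p)$ with $C^{(r)}_{I,s}(p)$ block by block. Your case $2\le r<\infty$ is fine, because there $\widehat N(t)=\max(t^2,t^r)$ really is convex; the paper gets the constant $\sqrt2$ where you get $2$.

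The gap is in the converse inclusion for $1\le r<2$. There you invoke convexity of $\widehat N$, but $\widehat N(t)=t^2\ind_{\{t\le1\}}+t^r\ind_{\{t>1\}}$ is \emph{not} convex when $r<2$: its slope drops from $2$ to $r$ at $t=1$. The midpoint inequality $\widehat N(\tfrac{a+b}{2})\le\tfrac12\widehat N(a)+\tfrac12\widehat N(b)$ you rely on is therefore false. For $r=1$, $a=\tfrac12$, $b=\tfrac32$, the left side is $\widehat N(1)=1$ and the right side is $\tfrac78$. For any $r<2$, take $a=1-\delta$, $b=1+\delta$ with $\delta$ small. Modified exponents are generally not convex even when $N$ is. The paper uses convexity of $\widehat Q$ only for the squared class of Section~\ref{sec:root-class}, and proves the scaling $\widehat N(t/a)\le\widehat N(t)/a$ by cases. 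Your closing remark that the halving comes from convexity "uniformly in $r\ge1$" has the same problem.

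The repair is short. Since $\widehat N$ is nondecreasing,
$\widehat N(y_i/2)\le\widehat N(\max(a_i,b_i))\le\widehat N(a_i)+\widehat N(b_i)$,
so $\sum_i\widehat N(y_i/2)\le2p$. Next, $\widehat N(t/2)\le\tfrac12\widehat N(t)$ holds for every $r\ge1$ by a direct case check rather than by convexity. For $t\le1$ and $t>2$ it is immediate. For $1<t\le2$, use $(t/2)^2\le t/2\le t^r/2$. Hence $\tfrac14C^{(r)}_{I,s}(p)\subseteq\cB^N_{I,s}(p)$, uniformly in $r$, and the rest of your argument goes through.

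Alternatively, do what the paper does and avoid the Minkowski-sum inclusion altogether. Use only that each summand, $\sqrt pB_2^I$ and $p^{1/r}B_{r,2}^{I,s}$, lies in $\cB^N_{I,s}(p)$. Then expand the multilinear supremum over the sum block by block, at a cost of a factor $2^{|\mathcal J|}$.
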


For $1\le r\le2$, expanding the sums of balls separates all powers of
$p$ from mixed tensor norms. For $r>2$, the intersections must be retained;
row sorting still simplifies each individual block.
Section~\ref{sec:examples} gives the derivations and fourth-order versions.
At $r=2$ we recover
$\sum_{\mathcal J}p^{|\mathcal J|/2}\|A\|_{\mathcal J}$, the expression
in the Gaussian formula \cite{Lat06}. The endpoint $r=1$ agrees with
\cite[Example 2]{KL15}.

\subsection{Proof strategy}

Conditioning on the last family lowers the order of the chaos, but leaves
a random coefficient tensor. Its moment is controlled by its mean norm
and a deterministic term, using the linear estimate
\eqref{eq:strong-weak}. The main difficulty is therefore the mean of
this random norm. The proof resolves it in three steps.

First, for tails $e^{-N(t^2)}$, shift independent Gaussian parameters
to a chosen point of each block. The entropy cost is controlled by
the squared Euclidean lengths of the parameters. An expansion of the
second moment produces exactly the merged blocks already present in
the moment formula. This closes the induction and includes random signs.

Second, compare a variable with log-concave tails to a product of two
variables from this class. The comparison duplicates every tensor index.
Split each parameter block into small rows and normalized large rows.
The normalized parts satisfy the Rademacher constraints; the remaining
row lengths become scalar multipliers. Products of independent signs
are again independent signs, so Rademacher moments remove the duplicated
indices. The scalar multipliers satisfy the original tail constraints.

Finally, the construction in \cite[Lemma 3.2]{Mel19} replaces a variable with regular moment
growth by a comparable product of a number of factors depending only
on $K$. The same reduction applies, with one change: the combined
multipliers lie in the convex hull of the original parameter set.
That is sufficient because the expression is linear in each parameter.
The argument for these multipliers is given after the log-concave proof
in Section~\ref{sec:regular}.

\section{Tools for the proof}\label{sec:preliminaries}

For $\mathbf s=(s_l)_{l\le k}$ with $s_l\in I_l$, let
$\|A\|_{\mathcal J,p;\mathbf s}^{N}$ denote the corresponding summand
in \eqref{eq:partition-norm}. We use the same convention for block sets
$C$, including the Rademacher sets $C^\varepsilon$.
To shorten the repeated sums, write
\[
 \|A\|_p^N:=\sum_{\mathcal J\in\mathcal P_d}\|A\|_{\mathcal J,p}^N.
\]
All parameter sets considered below are compact and symmetric.
Suprema are measurable, and absolute values around the entire
multilinear expression do not change them.

In Sections~\ref{sec:preliminaries}--\ref{sec:diagonal-collapse}
we use the assumptions and normalization of Theorem~\ref{thm:main}.
Convexity and \eqref{eq:normalization} imply
\begin{equation}\label{eq:tail-basic}
 N(0)=0,\qquad N(1)\le1,\qquad N(t)\ge t\quad(t>1).
\end{equation}
If $N$ is finite at some point beyond one, continuity and normalization
give $N(1)=1$; otherwise the last inequality is interpreted in the
extended sense. In particular all normalized variables have bounded
second moments, since
\[
 \E|X|^2=\int_0^\infty 2t\,\PP(|X|>t)\,dt
       \le1+\int_1^\infty 2t e^{-t}\,dt.
\]

We use two results from \cite{AL12}. Its Theorem 3.1 gives
\begin{equation}\label{eq:known-lower}
 \|A\|^{N}_{p}\le L_d\|S_A(X)\|_p.
\end{equation}
Its Lemma 5.9, attributed there to \cite[Theorem 1]{Lat96}, says that
for a bounded set $T\subset\R^n$ and $p\ge2$,
\begin{equation}\label{eq:strong-weak}
 \left\|\sup_{t\in T}|\langle t,X\rangle|\right\|_p
 \le L\left[
       \E\sup_{t\in T}|\langle t,X\rangle|
       +\sup_{\substack{t\in T\\\sum_i\widehat N_i(z_i)\le p}}
                                      \langle t,z\rangle\right].
\end{equation}
The first term is the expected supremum, while the second is a
deterministic optimization over the tail constraint.
In the induction, the second term will add a singleton block.
The expected supremum is the term that still requires an estimate.

We also use the entropy inequality
\begin{equation}\label{eq:entropy}
 \int f\,d\rho\le \KL(\rho\Vert\mu)+\log\int e^f\,d\mu,\qquad
 \KL(\rho\Vert\mu)=\int\log\frac{d\rho}{d\mu}\,d\rho ,
\end{equation}
for $\rho\ll\mu$ and the integrable expressions under consideration.
See \cite[Lemma A.1]{AA25}; its use with Gaussian shifts in tensor
problems is developed in the same paper. We verify the needed
integrability in the application below.

We will apply \eqref{eq:entropy} to an expression multiplied by a scale
$\lambda$. The next lemma chooses this scale from a deterministic norm
bound: if that bound is $M$, then $\lambda=p/M$ keeps the sum of
conjugate functions at most $p$.

\begin{lemma}\label{lem:dual-budget}
Let $\phi_i$ be nonnegative even convex functions with $\phi_i(0)=0$,
possibly taking the value $+\infty$. Put
\[
 F(u)=\sum_i\phi_i(u_i),\qquad
 K_p(v)=\sup_{F(u)\le p}\langle v,u\rangle,\qquad
 \phi_i^*(t)=\sup_{u\in\R}\{tu-\phi_i(u)\}.
\]
If $K_p(v)\le M$ and $M>0$, then
\begin{equation}\label{eq:dual-budget}
                  \sum_i\phi_i^*(pv_i/M)\le p.
\end{equation}
\end{lemma}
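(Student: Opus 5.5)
We pass to the rescaled vector $w=pv/M$ and bound the conjugate of $F$ at $w$ directly. The conjugate of a separable function is separable:
\[
 F^*(w)=\sup_u\Big\{\langle w,u\rangle-\sum_i\phi_i(u_i)\Big\}
        =\sum_i\phi_i^*(w_i),
\]
since the supremum splits over coordinates. So \eqref{eq:dual-budget} is equivalent to $F^*(w)\le p$. By homogeneity of $K_p$ in $v$, the hypothesis gives $K_p(w)=(p/M)K_p(v)\le p$. That is, $\langle w,u\rangle\le p$ whenever $F(u)\le p$.

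We show $\langle w,u\rangle-F(u)\le p$ for every $u$, splitting into three cases according to $s=F(u)$.
\begin{itemize}
\item If $s=+\infty$, the expression is $-\infty$ and there is nothing to prove.
\item If $s\le p$, then $\langle w,u\rangle\le K_p(w)\le p$, and $F(u)\ge0$ gives the bound.
\item If $p<s<\infty$, put $u'=(p/s)u$. Each $\phi_i$ is convex with $\phi_i(0)=0$, so $\phi_i(\theta t)\le\theta\phi_i(t)$ for $\theta\in[0,1]$. Hence $F(u')\le (p/s)F(u)=p$. Then $\langle w,u\rangle=(s/p)\langle w,u'\rangle\le (s/p)\,p=s=F(u)$, so $\langle w,u\rangle-F(u)\le0\le p$.
\end{itemize}
Taking the supremum over $u$ gives $F^*(w)\le p$, which is \eqref{eq:dual-budget}.

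There is no serious obstacle; the only point needing care is the third case. There the scaling argument uses exactly the convexity and $\phi_i(0)=0$ assumptions to pull a point with $F(u)>p$ back into the constraint set. Evenness and nonnegativity are not needed beyond ensuring $F\ge0$ and $\phi_i^*\ge0$, so that $\phi_i^*$ is well defined with values in $[0,\infty]$.
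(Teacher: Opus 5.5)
Your proof is correct and is essentially the paper's argument. You identify $\sum_i\phi_i^*(w_i)$ with $\sup_u\{\langle w,u\rangle-F(u)\}$, use the hypothesis together with $F\ge0$ when $F(u)\le p$, and, when $p<F(u)<\infty$, rescale $u$ by $p/F(u)$ using convexity and $\phi_i(0)=0$ to obtain $\langle w,u\rangle\le F(u)$; the paper does the same, only more tersely.
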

\begin{proof}
If $F(u)\le p$, then $(p/M)\langle v,u\rangle-F(u)\le p$.
If $p<F(u)<\infty$, convexity gives $F(pu/F(u))\le p$, and consequently
$(p/M)\langle v,u\rangle\le F(u)$.
Taking the supremum over $u$ with finite $F(u)$ proves the claim.
\end{proof}

\section{Gaussian smoothing}
\label{sec:root-class}

In this section $N_i^j$ are normalized convex tail exponents as above,
and $Y_i^j$ are independent symmetric variables with
\[
 \PP(|Y_i^j|\ge t)=e^{-Q_i^j(t)},\qquad Q_i^j(t)=N_i^j(t^2).
\]
The functions $Q_i^j$ are again convex and normalized.
Their modified exponents dominate $t^2$, so an admissible block has
squared Euclidean norm at most $p$. This is the property that controls
the cost of a Gaussian shift.
We prove the moment formula for this class directly.
Random signs are included by taking
$N(t)=0$ for $t\le1$ and $N(t)=+\infty$ for $t>1$.

\subsection{A logarithmic moment generating function estimate}

For one of these variables, suppress the indices and set
\begin{equation}\label{eq:root-potential}
 \phi(t)=\widehat Q(t)=F(t^2),\qquad
 F(s)=\begin{cases}s,&0\le s\le1,\\N(s),&s>1.\end{cases}
\end{equation}
The function $F$ is closed, convex, and satisfies $F(s)\ge s$.
Indeed, if $N$ is finite beyond one, then $N(1)=1$ and all its
right slopes there are at least one. If not, $F$ is the identity on
$[0,1]$ and infinite outside this interval. In particular
$\phi$ is convex and $\phi(t)\ge t^2$.

After Gaussian smoothing, a coefficient becomes a random polynomial $H$.
The next estimate bounds its averaged logarithmic moment generating
function using only $\|H\|_2$.

\begin{lemma}\label{lem:root-mgf}
Let $\psi(t)=\log\E e^{tY}$. For every $t\in\R$, real random variable
$H\in L_2$ and $\lambda\ge0$,
\begin{equation}\label{eq:root-mgf-l2}
 \psi(t)\le C\phi^*(t),\qquad
 \E\psi(\lambda H)\le C\phi^*(\lambda\|H\|_2),
\end{equation}
where $C$ is universal.
\end{lemma}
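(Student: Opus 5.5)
The plan is to reduce everything to one structural fact about $\phi^*$: the function
\[
 \Phi(\sigma)=\phi^*(\sqrt\sigma),\qquad \sigma\ge0,
\]
is concave. Indeed, since $\phi(u)=F(u^2)$ and $\phi$ is even, writing $v=u^2$ gives
\[
 \Phi(\sigma)=\sup_{v\ge0}\bigl\{\sqrt\sigma\sqrt v-F(v)\bigr\}.
\]
For fixed $v$ the map $\sigma\mapsto\sqrt{\sigma v}-F(v)$ is concave, so $\Phi$ is a supremum of concave functions. That alone does not give concavity, so I would verify it directly. For $\sigma>0$ the supremum is attained at some $v(\sigma)$, and $\Phi'(\sigma)=\sqrt{v(\sigma)}/(2\sqrt\sigma)$ by the envelope theorem. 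Convexity of $F$ makes the maximizer satisfy $\sqrt{v}/\sqrt\sigma\in\tfrac12\partial F(v)^{-1}$-type relations. The concrete check I would carry out is that $\sqrt{v(\sigma)/\sigma}$ is nonincreasing; this uses $F(s)\ge s$ and convexity of $F$, and is clear in the two extreme cases $F(s)=s$ and $F$ the indicator of $[0,1]$. Given concavity and $\Phi(0)=0$, we get $\Phi(4\sigma)\le4\Phi(\sigma)$, that is,
\[
 \phi^*(2t)\le4\phi^*(t).
\]
This doubling property is the key input for both inequalities.

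\emph{First inequality.} By symmetry of $Y$, $\psi(t)=\log\E\cosh(tY)$, and we may take $t\ge0$. I will use two regimes.

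For $t\le2$: $Y$ has uniformly bounded moments, since $\PP(|Y|\ge s)\le e^{1-s^2}$ by \eqref{eq:tail-basic} applied to $Q$. Hence $\psi(t)\le Ct^2$. On the other side, taking $u=t/2$ gives $\phi^*(t)\ge t^2/4$.

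For $t\ge2$: I use Young's inequality in the form $t|Y|\le\tfrac12\phi^*(2t)+\tfrac12\phi(|Y|)$. This yields
\[
 \E e^{t|Y|}\le e^{\phi^*(2t)/2}\,\E e^{\phi(|Y|)/2}.
\]
Since $\PP(\phi(|Y|)\ge u)\le e^{1-u}$, the last expectation is a universal constant. Therefore
\[
 \psi(t)\le C+2\phi^*(t).
\]
The additive constant is absorbed because $\phi^*(t)\ge t-1\ge1$ for $t\ge2$; the bound $\phi^*(t)\ge t-1$ comes from taking $u=1$.

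\emph{Second inequality.} Apply the first inequality pointwise and then use concavity of $\Phi$ with Jensen:
\[
 \E\psi(\lambda H)\le C\,\E\Phi(\lambda^2H^2)\le C\,\Phi(\lambda^2\E H^2)=C\phi^*(\lambda\|H\|_2).
\]
The main obstacle is the concavity of $\Phi$, equivalently the doubling bound $\phi^*(2t)\le4\phi^*(t)$, which the Young step needs. If the envelope-theorem argument turns out to be delicate at kinks of $F$, I would instead prove it by hand. Take a near-maximizer $u$ for $\phi^*(2t)$ and compare it with the test point $u/2$, or $\min(u,1)$, in $\phi^*(t)$, using $\phi(u)\ge u^2$ together with $\phi(u/2)\le\phi(u)/4$ for $|u|\le2$. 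The latter holds because $F$ is linear, equal to the identity, on $[0,1]$.
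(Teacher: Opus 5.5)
You have the paper's architecture. Concavity of $\Phi(\sigma)=\phi^*(\sqrt\sigma)$ yields both the doubling bound and the Jensen step; Young's inequality with $\E e^{\phi(Y)/2}\le C$ handles $t\ge2$; the subgaussian bound handles small $t$. Those steps are fine. The gap is the step you yourself call the main obstacle: concavity of $\Phi$ is announced, not proved. The envelope paragraph only says which monotonicity you would check, and the fallback does not replace it. Doubling is strictly weaker than concavity, not equivalent to it, so it does not license the inequality $\E\Phi(\lambda^2H^2)\le\Phi(\lambda^2\E H^2)$ you use. Moreover, your by-hand doubling argument is carried out only for near-maximizers with $|u|\le2$.

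The missing argument is short, and it is what the paper does. For $t>0$ the supremum of $tu-F(u^2)$ is attained at some $u>0$, since $F(s)\ge s$ with equality near $0$. Optimality gives $t=2au$ with $a\in\partial F(u^2)$ and $a\ge1$. The paper packages this as
\[
\phi^*(t)=\inf_{a\ge1}\Bigl\{\frac{t^2}{4a}+F^*(a)\Bigr\}.
\]
The bound $\le$ comes from $F(u^2)\ge au^2-F^*(a)$, and equality from the optimality condition. An infimum of affine functions of $t^2$ is concave in $t^2$. In your envelope language this says $\Phi'(\sigma)=u/(2t)=1/(4a)$, and $a$ is nondecreasing in $t$ because the maximizer $u$ and the subgradients of $F$ are both monotone.

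Alternatively, you can repair the fallback. Since $F$ is convex with $F(0)=0$, $\phi(2w)=F(4w^2)\ge4\phi(w)$ for every $w$, which gives $\phi^*(2t)\le4\phi^*(t)$ at once. Monotonicity of $\Phi$ then gives $\Phi(\sigma')\le4(\sigma'/\sigma)\Phi(\sigma)$ for $\sigma'\ge\sigma>0$. Splitting on $\{X\le\E X\}$ yields $\E\Phi(X)\le5\Phi(\E X)$ for $X\ge0$, which suffices for the second inequality with a larger constant.
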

\begin{proof}
The key point is that $q\mapsto\phi^*(\sqrt q)$ is concave.
This will allow Jensen's inequality to replace $H$ by its $L_2$ norm.
To prove concavity, we use the identity
\begin{equation}\label{eq:root-tail-duality}
 \phi^*(t)=\inf_{a\ge1}
       \left\{\frac{t^2}{4a}+F^*(a)\right\},
 \qquad F^*(a)=\sup_{s\ge0}\{as-F(s)\}.
\end{equation}
The upper bound follows from $F(u^2)\ge au^2-F^*(a)$.
For $t>0$, the maximum of $tu-F(u^2)$ is attained at some $u>0$,
because $F(u^2)\ge u^2$ and $F(u^2)=u^2$ near zero.
The optimality condition gives $a\in\partial F(u^2)$ with $t=2au$.
Here $a\ge1$, and equality in convex duality gives
$F^*(a)=au^2-F(u^2)$. Substitution yields the opposite inequality.
At $t=0$, choose $a=1$; negative $t$ follow by symmetry.

Thus $q\mapsto\phi^*(\sqrt q)$ is concave: the right side of
\eqref{eq:root-tail-duality} is an infimum of affine functions of $q=t^2$.
Since this function vanishes at zero, we have
\begin{equation}\label{eq:root-jensen}
 \phi^*(2t)\le4\phi^*(t),\qquad
 \E\phi^*(\lambda H)\le\phi^*(\lambda\|H\|_2).
\end{equation}
Also $\phi^*(t)\le t^2/4$, with equality for $|t|\le2$.

The tail definition implies
$\PP(\phi(Y)>s)\le e^{-s}$ for $s\ge1$, including at possible atoms.
Consequently
\[
 \E e^{\phi(Y)/2}
 =1+\frac12\int_0^\infty e^{s/2}\PP(\phi(Y)>s)\,ds
 \le e^{1/2}+e^{-1/2}<3.
\]
Young's inequality now gives
$\psi(t)\le\log3+\phi^*(2t)/2$.
For $|t|\ge1$, combine this with \eqref{eq:root-jensen} and
$\phi^*(t)\ge1/4$.
For $|t|\le1$, symmetry and the power series of the hyperbolic cosine give
\[
 \psi(t)\le\E(\cosh(tY)-1)
          \le t^2(\E\cosh Y-1)\le Ct^2=4C\phi^*(t).
\]
The bound on $\E\cosh Y=\E e^Y$ follows from the preceding estimate
at $t=1$ and $\phi^*(2)=1$. This proves the first part of
\eqref{eq:root-mgf-l2}; the second follows from \eqref{eq:root-jensen}.
\end{proof}

\subsection{The expected norm after conditioning}

We now estimate the random coefficient tensor produced by conditioning
on one family. Its order is one less than that of the original tensor,
but the bound will involve only the original partition norms.

Let $m\ge2$ and contract the last index of a tensor $A$:
\[
 B_{\mathbf i_{[m-1]}}
             =\sum_{\ell=1}^n a_{\mathbf i_{[m-1]},\ell}Y_\ell^m.
\]
For a partition with distinguished indices $(\mathcal J,\mathbf s)$ of $[m-1]$ and
$\mathcal R\subseteq\mathcal J$, define
\[
 J_{\mathcal R}=\{m\}\cup\bigcup_{I\in\mathcal R}I,\qquad
 \mathcal J_{\mathcal R}
          =(\mathcal J\setminus\mathcal R)\cup\{J_{\mathcal R}\}.
\]
In $J_{\mathcal R}$ choose distinguished index $m$ and retain all
other distinguished indices; denote the resulting choice by
$\mathbf s_{\mathcal R}$.

\begin{proposition}\label{prop:root-contraction}
Writing $k=|\mathcal J|$, we have
\begin{equation}\label{eq:root-contraction}
 \E\|B\|^{Q}_{\mathcal J,p;\mathbf s}
 \le(C+k/2)
        \sum_{\mathcal R\subseteq\mathcal J}
          \|A\|^{Q}_{\mathcal J_{\mathcal R},p;\mathbf s_{\mathcal R}}.
\end{equation}
In particular,
\begin{equation}\label{eq:expected-contraction}
                 \E \|B\|^{Q}_{p}\le C_m \|A\|^{Q}_{p}.
\end{equation}
\end{proposition}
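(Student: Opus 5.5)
We bound the expected supremum by the entropy inequality \eqref{eq:entropy}. Each block variable is shifted by an independent Gaussian, and the exponential moment is then computed using Lemma~\ref{lem:root-mgf}.

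Fix $(\mathcal J,\mathbf s)$ with blocks $I_1,\dots,I_k$ of $[m-1]$ and set
\[
 M=\sum_{\mathcal R\subseteq\mathcal J}\|A\|^{Q}_{\mathcal J_{\mathcal R},p;\mathbf s_{\mathcal R}}.
\]
For admissible $x=(x^1,\dots,x^k)$ with $x^l\in\cB^Q_{I_l,s_l}(p)$, write
\[
 B(x)=\sum_{\mathbf i}a_{\mathbf i}\prod_l x^l_{\mathbf i_{I_l}}Y^m_{i_m}=\sum_\ell v_\ell(x)Y_\ell^m .
\]
We must bound $\E\sup_x B(x)$. Since $\widehat Q\ge t^2$, every admissible $x^l$ satisfies $\|x^l\|_2^2\le p$.

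Let $\mu$ be the law of independent standard Gaussian arrays $g=(g^1,\dots,g^k)$ indexed like the $x^l$. Let $\rho_x$ be the law of $g+x$, so that $\KL(\rho_x\Vert\mu)=\tfrac12\sum_l\|x^l\|_2^2\le kp/2$. The multilinear form $\sum_{\mathbf i}a_{\mathbf i}\prod_l y^l Y_{i_m}$ is multilinear in independent centered shifts. Hence $B(x)=\E_{g}\big[\sum_\ell v_\ell(x+g)Y_\ell\big]$, and more precisely $\int f\,d\rho_x=B(x)$ with
\[
 f(y)=\lambda\sum_\ell v_\ell(y)Y_\ell .
\]
Applying \eqref{eq:entropy} with scale $\lambda$ and then taking the supremum over $x$ gives
\[
 \lambda\sup_xB(x)\le kp/2+\log\int e^{\lambda\sum_\ell v_\ell(g)Y_\ell}\,d\mu .
\]
Taking $\E_Y$ and applying Jensen to the log, we get $\E\log\le\log\E_g\E_Y$, which is at most $\E_g\sum_\ell\psi_\ell(\lambda v_\ell(g))$ after Fubini. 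Note that $v_\ell(g)=H_\ell$ is a Gaussian chaos. Lemma~\ref{lem:root-mgf} then gives the bound
\[
 \sum_\ell C\phi_\ell^*(\lambda\|H_\ell\|_2),
\]
where the log is moved inside the Gaussian average. This step should be done more carefully: Jensen's inequality on the Gaussian side goes the wrong way. We therefore plan to apply \eqref{eq:entropy} conditionally on $Y$, write $\E_Y\log\E_g e^{f}\le\log\E_g\E_Y e^f=\log\E_g e^{\sum\psi_\ell(\lambda H_\ell)}$, and control that using the concave $q\mapsto\phi^*(\sqrt q)$ together with hypercontractivity of $H_\ell$. If this proves troublesome, we fall back on the direct route, bounding $\E_g e^{\sum\psi}$ via $\psi\le Ct^2$ near zero.

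The key computation is the second moment
\[
 \|H_\ell\|_2^2=\sum_{\mathcal R\subseteq\mathcal J}\big\|(\text{contraction with the }x^l,\ l\notin\mathcal R,\text{ and the Euclidean norm over blocks in }\mathcal R)\big\|^2 .
\]
This expansion follows from $\E\prod_l(x^l+g^l)\otimes(x^l+g^l)$. For each $l$ one either keeps $x^l$ or pairs $g^l$ with itself, and the pairing produces the Euclidean norm over $I_l$. Thus $\|H_\ell\|_2\le\sum_{\mathcal R}w^{\mathcal R}_\ell$, where $w^{\mathcal R}_\ell$ is the $\ell$-row norm of the array obtained by merging the blocks in $\mathcal R$ with $\{m\}$, with distinguished index $m$. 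By duality, $\sup_{\sum\widehat Q_\ell(z_\ell)\le p}\sum_\ell w^{\mathcal R}_\ell z_\ell$ is exactly a part of $\|A\|^Q_{\mathcal J_{\mathcal R},p;\mathbf s_{\mathcal R}}$, hence at most $M$ once the sup over $x$ is included. Since $\sum_{\mathcal R}w^{\mathcal R}$ is a vector whose $K_p$-norm is at most $M$, Lemma~\ref{lem:dual-budget} with $\lambda=p/M$ yields $\sum_\ell\phi_\ell^*(\lambda\|H_\ell\|_2)\le p$.

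Combining these gives $(p/M)\E\sup_xB(x)\le kp/2+Cp$, that is, \eqref{eq:root-contraction}. Summing over $(\mathcal J,\mathbf s)$ then gives \eqref{eq:expected-contraction}, since each $(\mathcal J_{\mathcal R},\mathbf s_{\mathcal R})$ is a partition of $[m]$ with a choice of distinguished indices, and each occurs boundedly often. One must also check that the supremum over $x$ may be placed inside the $K_p$ bound uniformly. This is fine because the budget inequality holds for each fixed $x$ before the supremum is taken.

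The main obstacle I expect is the order of expectations described above: keeping $\E_Y$ outside while the entropy inequality requires the log-mgf inside the Gaussian average.
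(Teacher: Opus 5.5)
Your ingredients are the right ones: the Gaussian shift with $\KL(\rho_x\Vert\mu)\le kp/2$, Lemma~\ref{lem:root-mgf}, the second-moment expansion over $\mathcal R\subseteq\mathcal J$, and $\lambda=p/M$ via Lemma~\ref{lem:dual-budget}. The way you combine them, however, leaves a genuine gap, exactly at the step you flag, and neither proposed repair closes it.

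After applying \eqref{eq:entropy} to the uncompensated exponent $f=\lambda\sum_\ell Y_\ell^m v_\ell(\theta)$, you need a bound on $\E_Y\log\int e^f\,d\mu$. Your route $\E_Y\log\E_g e^f\le\log\E_g e^{\sum_\ell\psi_\ell(\lambda H_\ell)}$ leads to a quantity that is in general infinite. Indeed $\psi_\ell\ge0$, and $\E e^{tY}\ge e^{|t|}\PP(Y\ge1)\ge e^{|t|}/(2e)$, so $\psi_\ell(t)\ge|t|-2$. On the other hand, $H_\ell=v_\ell(g)$ is a nonzero Gaussian chaos of order $k$ with $\|H_\ell\|_q\ge c\,q^{k/2}$, so $\E_g e^{\lambda|H_\ell|}=\infty$ whenever $k\ge3$. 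For $k=2$ the same happens for Gaussian-type tails $Q(t)=t^2$, where $\psi_\ell(t)\ge t^2/4-\log2$. For $k\ge3$, even $\int e^f\,d\mu$ itself is $+\infty$ with positive probability in $Y^m$, so your displayed bound on $\lambda\sup_xB(x)$ is vacuous. Hypercontractivity cannot help, and neither can $\psi\le Ct^2$ near zero, because the divergence comes from large values of $H_\ell$.

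There is also a mismatch of measures. Your expansion of $\|H_\ell\|_2^2$ over $\mathcal R$ is the second moment of $v_\ell(x+g)$, that is, under $\rho_x$. After the change of measure, the exponential is integrated against $\mu$, where $x$ is absent and only the term $\mathcal R=\mathcal J$ survives. So the merged norms with $\mathcal R\ne\mathcal J$ never legitimately enter your chain.

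The missing idea is to put the compensator into the exponent \emph{before} changing measure, which is what the paper does. For fixed $Y^m$, apply \eqref{eq:entropy} to
\[
 f_Y(\theta)=\lambda\sum_\ell Y_\ell^m v_\ell(\theta)
            -\sum_\ell\psi_\ell\bigl(\lambda v_\ell(\theta)\bigr).
\]
Then $\int f_Y\,d\rho_x=\lambda B(x)-\E_{\rho_x}\sum_\ell\psi_\ell(\lambda v_\ell)$. The subtracted term is where Lemma~\ref{lem:root-mgf} belongs: take $H=v_\ell(\theta)$ with $\theta\sim\rho_x$. Its $L_2$ norm is given by your $\mathcal R$-expansion, so Lemma~\ref{lem:dual-budget} bounds the penalty by $Cp$ uniformly in admissible $x$.

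On the other side, $Z(Y^m)=\int e^{f_Y}\,d\mu$ satisfies $\E Z=1$ by Tonelli and independence of the $Y_\ell^m$, so $\E\log Z\le0$. Integrability of $\log Z$ follows from $(\log Z)_+\le Z$ and the Jensen bound $\log Z\ge\int f_Y\,d\mu=-\sum_\ell\E_\mu\psi_\ell(\lambda v_\ell)$, using $\E_\mu v_\ell=0$. This is finite since $\psi_\ell(t)\le Ct^2$. Hence $\lambda\sup_xB(x)\le(k/2+C)p+\log Z$ with a right side independent of $x$, and taking expectations gives \eqref{eq:root-contraction}.

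In this form $\psi_\ell$ is integrated only linearly under $\rho_x$ and is never exponentiated under $\mu$. No exponential integrability of Gaussian chaoses is required. The rest of your outline, including the summation giving \eqref{eq:expected-contraction}, then goes through.
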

\begin{proof}
\noindent\textbf{1. Shift Gaussian parameters to the chosen blocks.}
Let $\mu$ be the product law of independent standard Gaussian arrays
$G^I$, one array for each block $I\in\mathcal J$.
For admissible parameters $x=(x^I)_{I\in\mathcal J}$, let $\rho_x$ be the law
of $(x^I+G^I)_{I\in\mathcal J}$. Since $\widehat Q(t)\ge t^2$,
\begin{equation}\label{eq:shift-cost}
       \KL(\rho_x\Vert\mu)=\frac12\sum_{I\in\mathcal J}\|x^I\|_2^2\le kp/2.
\end{equation}
Define
\[
 h_\ell(\theta)=
       \sum_{\mathbf i_{[m-1]}}a_{\mathbf i_{[m-1]},\ell}
                          \prod_{I\in\mathcal J}\theta^I_{\mathbf i_I},
 \qquad v_\ell(x)^2=\E_{\rho_x}h_\ell^2.
\]
Independence and centering give $\E_{\rho_x}h_\ell=h_\ell(x)$.
Thus averaging preserves the coefficient at the chosen point.

\par\smallskip\noindent\textbf{2. Identify the norms created by the second moment.}
In each block expand $x^I+G^I$, choosing either $x^I$ or $G^I$.
The subfamily $\mathcal R$ records the blocks in which the Gaussian
term was selected. This gives
\begin{equation}\label{eq:variance-expansion}
 v_\ell(x)^2=\sum_{\mathcal R\subseteq\mathcal J}c_{\mathcal R,\ell}(x)^2,
\end{equation}
where, with $U=\bigcup_{I\in\mathcal R}I$,
\[
 c_{\mathcal R,\ell}(x)=
 \left(\sum_{\mathbf i_U}
   \left[\sum_{\mathbf i_{[m-1]\setminus U}}
      a_{\mathbf i_{[m-1]},\ell}
            \prod_{I\in\mathcal J\setminus\mathcal R}x^I_{\mathbf i_I}
   \right]^2\right)^{1/2}.
\]
Terms with different $\mathcal R$ are orthogonal, because some independent
centered Gaussian block occurs in just one of them.
For fixed $\mathcal R$, the covariances of the Gaussian coordinates
are Kronecker deltas, which gives the displayed sum of squares.

Let $K_p$ in Lemma~\ref{lem:dual-budget} use
$\phi_\ell=\widehat Q_\ell^m$.
The Gaussian blocks contribute a Euclidean norm.
They can therefore be joined to the last index in the block
$J_{\mathcal R}$, with that last index distinguished.
Rowwise Euclidean duality gives
\begin{equation}\label{eq:merged-block}
 K_p(c_{\mathcal R}(x))
       \le\|A\|^{Q}_{\mathcal J_{\mathcal R},p;\mathbf s_{\mathcal R}}.
\end{equation}
To see this, take $u_\ell\ge0$ with $\sum_\ell\phi_\ell(u_\ell)\le p$.
In the new block $J_{\mathcal R}$ choose the row indexed by $\ell$
in the direction of the vector inside the brackets defining
$c_{\mathcal R,\ell}$, with length $u_\ell$; use zero when this vector
vanishes. This is an admissible new block and gives the contraction
$\sum_\ell u_\ell c_{\mathcal R,\ell}(x)$.
Nonnegative $u$ suffice, since the constraint is invariant under
coordinatewise changes of signs.

\par\smallskip\noindent\textbf{3. Choose one scale for all admissible parameters.}
Denote the sum on the right of \eqref{eq:root-contraction} by $M_0$.
Since $v(x)\le\sum_{\mathcal R}c_{\mathcal R}(x)$ coordinatewise,
monotonicity and subadditivity of $K_p$ imply $K_p(v(x))\le M_0$.
If $M_0=0$, the term $\mathcal R=\mathcal J$ forces $A=0$.
Otherwise set $\lambda=p/M_0$. This choice depends only on $A,p$
and the partition, not on $x$.
Lemmas~\ref{lem:dual-budget} and \ref{lem:root-mgf} then give,
uniformly in admissible $x$,
\begin{equation}\label{eq:mgf-penalty}
 \E_{\rho_x}\sum_\ell\psi_\ell(\lambda h_\ell)
 \le C\sum_\ell\phi_\ell^*(\lambda v_\ell(x))\le Cp,
 \qquad \psi_\ell(t)=\log\E e^{tY_\ell^m}.
\end{equation}

\par\smallskip\noindent\textbf{4. Apply the entropy inequality before taking the supremum.}
On the original probability space of $Y^m$, set
\[
 Z(Y^m)=\int\exp\left\{\lambda\sum_\ell Y_\ell^m h_\ell(\theta)
                  -\sum_\ell\psi_\ell(\lambda h_\ell(\theta))\right\}
                                                    \,d\mu(\theta).
\]
Independence and Tonelli's theorem give $\E Z=1$, so $0<Z<\infty$
almost surely. Moreover $\E_\mu h_\ell=0$ and $\psi_\ell(t)\le Ct^2$,
while $h_\ell$ is a Gaussian polynomial. Jensen's inequality therefore
gives the finite deterministic lower bound
\[
 \log Z\ge-\sum_\ell\E_\mu\psi_\ell(\lambda h_\ell).
\]
Also $(\log Z)_+\le Z$. Thus $\log Z$ is integrable and
$\E\log Z\le0$.

Apply \eqref{eq:entropy} to the exponent defining $Z$, with
$(\rho,\mu)=(\rho_x,\mu)$. Its integral under $\rho_x$ is finite,
by the polynomial form of $h_\ell$ and the quadratic upper bound on
$\psi_\ell$. Equations \eqref{eq:shift-cost} and \eqref{eq:mgf-penalty}
give, on the single event $\{Z<\infty\}$, for every admissible $x$,
\[
 \lambda\sum_\ell Y_\ell^m h_\ell(x)
                     \le(k/2+C)p+\log Z(Y^m).
\]
The right-hand side no longer depends on $x$.
We may therefore take the supremum over all admissible blocks on the left.
Taking expectation uses $\E\log Z\le0$; dividing by $\lambda=p/M_0$
proves \eqref{eq:root-contraction}.
Summing over the finitely many partitions and distinguished indices
proves \eqref{eq:expected-contraction}; their number depends only on $m$.
\end{proof}

\subsection{From the mean estimate to moments}

The preceding proposition controls the mean of the random norm.
The linear estimate \eqref{eq:strong-weak} supplies its $p$th moment.
We give the induction explicitly, since the Rademacher case obtained
here is needed in both later reductions.

\begin{theorem}\label{thm:root-moments}
For the independent variables with tails $e^{-N_i^j(t^2)}$,
\begin{equation}\label{eq:root-tail-moments}
                    \|S_A(Y)\|_p\asymp_d \|A\|^{Q}_{p},\qquad p\ge2.
\end{equation}
In particular Corollary~\ref{thm:rademacher} holds in every order.
\end{theorem}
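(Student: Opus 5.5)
The lower bound is \eqref{eq:known-lower}, i.e.\ \cite[Theorem 3.1]{AL12}. Its hypotheses hold here because the $Q_i^j$ are convex and normalized; for random signs I would either check the lower bound directly or observe that the argument of \cite{AL12} uses only the tail exponents. So the task is the upper bound, $\|S_A(Y)\|_p\le C_d\|A\|^Q_p$, which I prove by induction on $d$.

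For $d=1$, apply \eqref{eq:strong-weak} with $T=\{a\}$. The expectation term is at most $\|\langle a,Y\rangle\|_2\le C\|a\|_2$, by the bounded second moments. Since $\widehat Q(t)\ge t^2$, we have $\|a\|_2\le \|a\|_{Y,p}$ whenever $p\ge1$. The deterministic term is exactly $\|a\|_{Y,p}$, the singleton partition norm.

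For the inductive step, take order $m\ge2$ and condition on $Y^m$. Then $S_A(Y)=S_B(Y^1,\dots,Y^{m-1})$ with $B$ the contraction defined before Proposition~\ref{prop:root-contraction}. The induction hypothesis applied conditionally gives
\[
\|S_A(Y)\|_p\le C_{m-1}\big\|\,\|B\|^Q_p\,\big\|_{L_p(Y^m)}.
\]
Each summand $\|B\|^Q_{\mathcal J,p;\mathbf s}$ is a supremum of $|\langle t,Y^m\rangle|$ over the bounded set $T$ of vectors $t_\ell=\sum a_{\mathbf i,\ell}\prod_l x^l_{\mathbf i_{I_l}}$, with $x$ ranging over admissible parameters. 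Applying \eqref{eq:strong-weak} bounds its $p$-th moment by two terms. The first is $\E\|B\|^Q_{\mathcal J,p;\mathbf s}$, which Proposition~\ref{prop:root-contraction} controls by $C\|A\|^Q_p$. The second is the deterministic supremum over admissible $x$ and over $z$ with $\sum_\ell\widehat Q^m_\ell(z_\ell)\le p$. That supremum is precisely $\|A\|^Q_{\mathcal J\cup\{\{m\}\},p;(\mathbf s,m)}$, the partition norm obtained by adding $\{m\}$ as a singleton block. Summing over the finitely many $(\mathcal J,\mathbf s)$ closes the induction with constants depending only on $m$.

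The step I expect to need the most care is the bookkeeping that makes this go through. First, the norms produced (merged blocks from the proposition and the added singleton) must be genuine summands of $\|A\|^Q_p$ with admissible distinguished indices. Second, the conditional application must be legitimate: measurability and finiteness of the suprema, and the induction constant being uniform in the coefficient array, so that it can be applied to the random $B$. Both points seem routine given the compactness remark in Section~\ref{sec:preliminaries}.

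For the Rademacher statement, note that random signs fit this class with $N(t)=0$ for $t\le1$ and $N(t)=+\infty$ for $t>1$. Then $\widehat Q(t)=t^2$ for $|t|\le1$ and $+\infty$ otherwise. The constraint in each block reads $\sum_{i_s}\|\text{row}\|_2^2\le p$ with every row length at most $1$, which is exactly $C^\varepsilon_{I,s}(p)$. This proves Corollary~\ref{thm:rademacher}.
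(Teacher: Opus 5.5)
Your proposal is correct and follows the paper's proof step by step. The lower bound comes from \eqref{eq:known-lower}, and the base case from \eqref{eq:strong-weak}. The induction conditions on $Y^m$ and uses Minkowski, then applies \eqref{eq:strong-weak} to each summand. Proposition~\ref{prop:root-contraction} controls the mean, and the weak term yields the added singleton block $\{m\}$.

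One small slip in the base case: $\|a\|_2\le\|a\|_{Y,p}$ does not follow from $\widehat Q(t)\ge t^2$, which gives the opposite comparison $\|a\|_{Y,p}\le\sqrt p\,\|a\|_2$. It holds because $\widehat Q(t)=t^2$ for $|t|\le1$, so the Euclidean unit ball is admissible when $p\ge1$; this is how the paper argues.
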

\begin{proof}
The lower bound is \eqref{eq:known-lower}. For the upper bound in order
one, apply \eqref{eq:strong-weak} to a single coefficient vector.
The mean is at most $C\|A\|_2$ by independence, symmetry and the uniform
second-moment bound. The unit Euclidean ball is contained in
$\cB^Q_{\{1\},1}(p)$, so this term is bounded by $C\|A\|_p^Q$.

Suppose the upper bound holds in order $m-1$, and condition on $Y^m$.
The induction hypothesis and Minkowski's inequality give
\[
 \|S_A(Y)\|_p
       \le C_{m-1}\sum_{\mathcal J,\mathbf s}
                    \big\|\|B\|^{Q}_{\mathcal J,p;\mathbf s}\big\|_p.
\]
For a fixed partition and choice of distinguished indices, the coefficient set
$\{(h_\ell(x))_\ell:x^I\in\cB^Q_{I,s_I}(p)\}$ is compact and symmetric.
By \eqref{eq:strong-weak},
\[
 \big\|\|B\|^{Q}_{\mathcal J,p;\mathbf s}\big\|_p
 \le L\left[\E\|B\|^{Q}_{\mathcal J,p;\mathbf s}
       +\|A\|^{Q}_{\mathcal J\cup\{\{m\}\},p;(\mathbf s,m)}\right].
\]
The second term is exactly the weak term in \eqref{eq:strong-weak}:
it adds a vector with $\sum_\ell\widehat Q_\ell^m(z_\ell)\le p$.
Summing and using \eqref{eq:expected-contraction} yields
\[
 \|S_A(Y)\|_p
 \le LC_{m-1}\big[\E \|B\|^{Q}_{p}+\|A\|^{Q}_{p}\big]
 \le C_m \|A\|^{Q}_{p}.
\]
This completes the induction independently of the general-tail upper bound.
\end{proof}

\section{From squared-argument tails to log-concave tails}
\label{sec:product-reduction}

The preceding proof applies to tails $e^{-N(t^2)}$.
To handle a general tail $e^{-N(t)}$, we compare its variable to a product
of two variables from the class just treated.
The price is a tensor of twice the order.

Return to the variables $X_i^j$ with general normalized convex
exponents $N_i^j$. For every $(i,j)$, take two independent symmetric
variables $Y_i^{j,0},Y_i^{j,1}$ with absolute tails
$\exp(-N_i^j(t^2))$, all these variables being mutually independent.

\begin{lemma}\label{lem:product}
For every real tensor $A$ and $p\ge1$,
\begin{equation}\label{eq:product-comparison}
 \|S_A(X)\|_p
 \le2^d\left\|
       \sum_{\mathbf i}a_{\mathbf i}
            \prod_{j=1}^d Y_{i_j}^{j,0}Y_{i_j}^{j,1}
                                      \right\|_p.
\end{equation}
\end{lemma}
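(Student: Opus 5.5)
The plan is to reduce \eqref{eq:product-comparison} to a one-dimensional tail comparison, and then use a contraction argument one variable at a time. The constant $2^d$ should come from a factor $2$ in each of the $d$ families, since $S_A$ is homogeneous of degree one in each family.

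\textbf{Step 1: tail comparison.} Fix $(i,j)$, write $N=N_i^j$, and put $Z=Y_i^{j,0}Y_i^{j,1}$. Independence gives, for $t>0$,
\[
 \PP(|Z|\ge t)\ge\PP(|Y_i^{j,0}|\ge\sqrt t)\,\PP(|Y_i^{j,1}|\ge\sqrt t)=e^{-2N(t)}.
\]
Since $N$ is convex with $N(0)=0$ (by \eqref{eq:tail-basic}), we have $N(2t)\ge 2N(t)$. Hence
\[
 \PP(|X_i^j|\ge 2t)=e^{-N(2t)}\le e^{-2N(t)}\le\PP(|Z|\ge t).
\]
Thus $|X_i^j|/2$ is stochastically dominated by $|Z|$. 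The extended-valued case $N=+\infty$ beyond some point is harmless, because the inequality only becomes easier there.

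\textbf{Step 2: coupling.} On a product space I will realize all variables jointly, with independence preserved across all $(i,j)$. Take the $Y$'s as given and an independent uniform variable $U_i^j$ for each $(i,j)$. Let $Z_i^j=Y_i^{j,0}Y_i^{j,1}$, which is symmetric. Define $X_i^j=2\,\mathrm{sgn}(Z_i^j)\,\theta_i^j|Z_i^j|$, where $\theta_i^j\in[0,1]$ is built from $|Z_i^j|$ and $U_i^j$ by the quantile transform. The construction must ensure that $X_i^j$ has the law with tail $e^{-N(t)}$ and is symmetric.

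Concretely, I will set $|X_i^j|=2G^{-1}(\text{randomized }F(|Z_i^j|))$. Here $F$ is the distribution function of $|Z|$ and $G$ is that of $|X|/2$. Domination gives $G^{-1}\circ F\le\mathrm{id}$, so $\theta\le1$. The sign of $X$ is that of $Z$, which is independent of $|Z|$ and hence makes $X$ symmetric.

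The resulting family $(X_i^j)$ is independent. Each $X_i^j$ is a function of $(Y_i^{j,0},Y_i^{j,1},U_i^j)$ only, and is of the form $2\theta_i^jZ_i^j$ with $\theta_i^j\in[0,1]$.

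\textbf{Step 3: contraction.} I condition on all the $Y$'s and $U$'s except those belonging to a single pair $(i,j)$. Then $S_A(X)$ is an affine function $\alpha+\beta X_i^j$, where $\alpha,\beta$ are independent of this coordinate. I replace $X_i^j=2\theta Z$ by $2Z$ one coordinate at a time.

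The point is that $\E|\alpha+2\theta\beta Z|^p$ should be at most $\E|\alpha+2\beta Z|^p$. For a fixed value of $|Z|$, the map $c\mapsto\E_{\mathrm{sign}}|\alpha+c\beta\varepsilon|Z||^p$ is even and convex in $c$. So it is nondecreasing in $|c|$ on $[0,2]$.

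The subtlety, and the main obstacle, is that $\theta$ depends on $|Z|$ and $U$. It does not, however, depend on the sign of $Z$, which is an independent Rademacher variable. So I first average over this sign, then use monotonicity in $c$ pointwise in $(|Z|,U)$, and finally integrate out.

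Iterating over all pairs $(i,j)$ replaces every $X_i^j$ by $2Z_i^j$. Multilinearity then pulls out $2^d$, which gives \eqref{eq:product-comparison} for every $p\ge1$.
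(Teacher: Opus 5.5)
Your proof is correct and takes essentially the same route as the paper: the tail comparison $e^{-2N(t)}\ge e^{-N(2t)}$ gives $|X_i^j|\le_{\mathrm{st}}2|Y_i^{j,0}Y_i^{j,1}|$, followed by a coupling and the contraction principle for signs. The differences are cosmetic. You take the sign of $X_i^j$ from that of $Y_i^{j,0}Y_i^{j,1}$, so you never need the fact that products of independent signs are signs. You also prove the contraction one coordinate at a time, using evenness and convexity in the scalar multiplier, whereas the paper works family by family with auxiliary signs and Jensen's inequality.
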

\begin{proof}
For one pair, independence gives
\[
 \PP(|Y^0Y^1|\ge t)
 \ge\PP(|Y^0|\ge\sqrt t,\ |Y^1|\ge\sqrt t)
 =e^{-2N(t)}\ge e^{-N(2t)}
 =\PP(|X|\ge2t),
\]
using $N(2t)\ge2N(t)$.
Thus $|X|\le_{\mathrm{st}}2|Y^0Y^1|$.
Couple these nonnegative variables in this order separately for each
$(i,j)$, and realize symmetric variables by independent signs times
their absolute values. Conditional on the coupled absolute values,
the contraction principle for signs bounds the smaller coefficients
by the larger ones in $L_p$.

More explicitly, if $0\le a_i\le b_i$, auxiliary independent signs
with means $a_i/b_i$ express the sum with coefficients $a_i$
as a conditional average of sums with coefficients $b_i$.
Jensen's inequality and invariance of independent uniform signs under
multiplication by independent signs give the contraction; zero $b_i$
may be omitted. Apply this argument to one family at a time, conditioning
on the other families. The factor two is paid once per family, giving
$2^d$, rather than once per coordinate.
The products of the two independent signs in $Y^0Y^1$ are again
independent uniform signs. This proves \eqref{eq:product-comparison}.
\end{proof}

Let $\Omega=[d]\times\{0,1\}$ and define the tensor with duplicated
indices by
\begin{equation}\label{eq:diagonal-lift}
 \widetilde a_{(i_{j,a})_{(j,a)\in\Omega}}
 =a_{i_{1,0},\ldots,i_{d,0}}
                     \prod_{j=1}^d\ind_{\{i_{j,0}=i_{j,1}\}}.
\end{equation}
For either copy of coordinate $j$, use the tail exponent
$Q_i^{j,a}(t)=N_i^j(t^2)$.
Theorem~\ref{thm:root-moments}, now in order $2d$, and
Lemma~\ref{lem:product} show that
\begin{equation}\label{eq:lifted-upper}
                    \|S_A(X)\|_p\le C_d \|\widetilde A\|^{Q}_{p}.
\end{equation}
To prove Theorem~\ref{thm:main}, it therefore remains to compare this
last deterministic expression with $\|A\|^{N}_{p}$.
Partitions of $\Omega$ can cross the pairs $\{(j,0),(j,1)\}$, so
collapsing their blocks directly does not preserve their row constraints.
The next section avoids that obstruction.

\section{Removing duplicated indices}\label{sec:diagonal-collapse}

\begin{proposition}\label{prop:diagonal-collapse}
For every normalized family $N_i^j$, every real tensor $A$ and $p\ge2$,
\begin{equation}\label{eq:diagonal-collapse-proved}
                  \|\widetilde A\|^{Q}_{p}\le C_d \|A\|^{N}_{p}.
\end{equation}
\end{proposition}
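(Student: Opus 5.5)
We fix a partition $\mathcal K$ of $\Omega$ with distinguished indices $\mathbf t$ and admissible parameters $y^L\in\cB^Q_{L,t_L}(p)$, $L\in\mathcal K$. We then bound the multilinear form $\sum\widetilde a\prod_L y^L$ by $C_d\|A\|^N_p$. Because of the diagonal indicators, this form equals $\sum_{\mathbf i}a_{\mathbf i}\prod_L y^L_{\mathbf i}$, where each $y^L$ is evaluated at the collapsed indices: index $i_j$ appears in place of both $(j,0)$ and $(j,1)$ whenever they lie in $L$. The proof follows the strategy announced in the introduction. We split each parameter into small rows and large rows, represent the large rows as Rademacher-admissible arrays times scalar multipliers, and then use Corollary~\ref{thm:rademacher} to remove the duplication.

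\textbf{Step 1 (splitting).} For a block $L$ with distinguished index $t=(j,a)$, let $r_{i}$ be the Euclidean length of the row of $y^L$ indexed by $i_t=i$. Admissibility means $\sum_i\widehat Q_i^j(r_i)\le p$, that is, $\sum_{r_i\le1}r_i^2+\sum_{r_i>1}N_i^j(r_i^2)\le p$.

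We write $y^L=y^{L,\le}+y^{L,>}$ according to whether $r_i\le1$ or $r_i>1$. The small part is a Euclidean array of norm at most $\sqrt p$ whose rows have length at most $1$, so it lies in $C^\varepsilon_{L,t}(p)$.

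For the large part we write $y^{L,>}_{\cdot}=r_{i_t}\,\bar y_{\cdot}$, where the rows of $\bar y$ are normalized. The normalized array $\bar y$ has row lengths equal to $1$. Its total squared norm is the number of large rows, which is at most $p$ because $N(s)\ge s\ge1$ for $s>1$. Hence $\bar y\in C^\varepsilon_{L,t}(p)$ as well.

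The multiplier $w_i=r_i^2$ (for $r_i>1$) satisfies $\sum N_i^j(w_i)\le p$, so $w$ is admissible for the \emph{original} tail $N^j$. For this reason we attach the square $r_i^2$ to coordinate $j$: the relation $Q(t)=N(t^2)$ converts one $Q$-row of length $r$ into an $N$-multiplier $r^2$. Since only one factor $r_i$ is present, we distribute it as $r_i=\sqrt{w_i}$. The second copy $(j,1-a)$ must then carry the other $\sqrt{w_i}$, or we must instead bound $r_i\le r_i^2$ and absorb $\sqrt{w_i}$ harmlessly.

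Expanding the multilinear form over the $2^{|\mathcal K|}$ choices of small or large part in each block gives finitely many terms.

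\textbf{Step 2 (collapse via Rademacher).} In each term, the diagonal multilinear form with all parameters in Rademacher sets and with scalar weights $w$ on some coordinates is a Rademacher partition norm. By Corollary~\ref{thm:rademacher} (already proved in Theorem~\ref{thm:root-moments}) it is dominated by the $p$th moment of a Rademacher chaos of order $2d$ with duplicated indices. The products $\varepsilon^{j,0}_i\varepsilon^{j,1}_i$ are again independent signs, so this chaos equals in law a Rademacher chaos of order $d$ with coefficients $a_{\mathbf i}\prod_j m^j_{i_j}$. Here the $m^j$ are the multipliers, either $1$ or $w^j$, attached to coordinate $j$.

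Applying Corollary~\ref{thm:rademacher} again, in order $d$, bounds this by $\sum_{\mathcal J}\|(a_{\mathbf i}\prod_j m^j_{i_j})\|^\varepsilon_{\mathcal J,p}$. It then remains to absorb each multiplier $m^j$ into the block containing $j$. A block $x\in C^\varepsilon_{I,s}(p)$ multiplied by $w^j_{i_j}$, where $w^j$ is $N^j$-admissible, should land in a constant multiple of $\cB^N_{I,j}(p)$ when we distinguish $j$. Since the rows of $x$ have length at most $1$, the new row lengths are at most $w_{i_j}$ times the lengths of the $j$-fibres. If $j$ is not the distinguished index $s$, some reorganization is needed; we would try distinguishing $j$ instead and using $\|x\|_2\le\sqrt p$.

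Blocks with no multiplier satisfy $C^\varepsilon\subseteq\cB^N(p)$, since $\widehat N(t)=t^2$ for $t\le1$.

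\textbf{Main obstacle.} The hard step is the last absorption. Several multipliers $w^{j}$ may fall into the same block of $\mathcal J$, and the product of two admissible row weights inside one block is not obviously admissible. Handling a multiplier on a nondistinguished index also requires care. I would try to resolve both points by H\"older/AM–GM: bound $\prod_l w^{j_l}_{i_{j_l}}\cdot x$ by a sum over $l$ of arrays each carrying a single weight, paying a factor depending on $d$. Convexity of $N$, through $N(\lambda t)\ge\lambda N(t)$ for $\lambda\ge1$, would then be used to keep each of these arrays inside $C\cdot\cB^N_{I,j_l}(p)$.
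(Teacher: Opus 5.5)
There is a genuine gap, in the absorption step that you flag as the main obstacle, and the fix you propose cannot work. Your splitting into short rows and normalized long rows matches the paper. So does the passage from the order-$2d$ Rademacher partition norm to $\|S_{A^w}(\eta)\|_p$, via the known lower bound and $\eta_i^j=\varepsilon_i^{j,0}\varepsilon_i^{j,1}$.

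\textbf{Why blockwise absorption fails.} You apply the order-$d$ Rademacher upper bound to the whole weighted tensor $A^w$. You then try to place each weighted block $w_{i_j}x$, with $x\in C^\varepsilon_{I,s}(p)$, into $C\cdot\cB^N_{I,j}(p)$. This inclusion fails already with a single weight. Take $N(t)=t$, $I=\{1,2\}$ and $s=1$. Let $x_{i,1}=1$ for $i\le\lfloor p\rfloor$ and all other entries zero, so $x\in C^\varepsilon_{I,1}(p)$. Let $w=p\,e_1$ on coordinate $2$, which is $N$-admissible. Along coordinate $2$, the fibre of $w_{i_2}x$ has length $p\lfloor p\rfloor^{1/2}$. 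Along coordinate $1$, it has $\lfloor p\rfloor$ rows of length $p$. Either way the constant must be at least of order $\sqrt p$. So no H\"older/AM--GM splitting followed by convexity can put a weight inside a non-singleton block. The inequality $\sum_{\mathcal J}\|A^w\|^{\varepsilon}_{\mathcal J,p}\le C_d\|A\|^N_p$ that you need is true, but only because the weighted coordinates can be split off as singletons.

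\textbf{The missing idea: freeze the signs of the weighted coordinates.} Let $J$ be the set of coordinates carrying a multiplier and $L=[d]\setminus J$. Condition on the signs $\eta^j$, $j\in J$, before using the upper bound:
$\|S_{A^w}(\eta)\|_p\le\max_{\sigma}\|S_{C^{w,\sigma}}(\eta^L)\|_p$, where $C^{w,\sigma}_{\mathbf i_L}=\sum_{\mathbf i_J}a_{\mathbf i}\prod_{j\in J}w^j_{i_j}\sigma^j_{i_j}$.
Now apply the Rademacher upper bound only in order $|L|$. The blocks on $L$ lie in $C^\varepsilon\subseteq\cB^N$. Each $w^j\sigma^j$ is a vector in the singleton body $\cB^N_{\{j\},j}(p)$. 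Hence every term is bounded by the partition norm of $A$ for the partition of $L$ augmented by $\{\{j\}:j\in J\}$, uniformly in $\sigma$. Weights never enter a block, so both of your obstacles disappear. The maximum over sign choices costs nothing.

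\textbf{Bookkeeping of the multipliers.} The factor that actually multiplies the tensor is the row length $r>1$ itself, not $r^2$. It is $N$-admissible because $N(r)\le N(r^2)$, so nothing needs to be ``distributed'' between the two copies. What does need checking is the case where $(j,0)$ and $(j,1)$ are distinguished in two different blocks and the long part is selected in both. The combined weight is then a product $uv$, and its budget follows from $N(uv)\le N((u^2+v^2)/2)\le\tfrac12N(u^2)+\tfrac12N(v^2)$, by monotonicity and convexity.
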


A partition of the duplicated indices need not descend to a partition
of the original indices. We instead separate the row lengths from
the row directions. After normalization, the directions are admissible
for random signs. The lengths can be combined into original singleton
constraints, using the multiplier idea in
\cite[equations (17)--(19), preprint version]{Mel16}.
The proof below implements these two operations without matching
the lifted partition to an original one.

\begin{proof}
Fix a partition $\mathcal Q$ of $\Omega$, distinguished indices
$s_B=(j_B,a_B)\in B$, and admissible parameters
$x^B\in\cB^Q_{B,s_B}(p)$ for $B\in\mathcal Q$.
Let $r_i^B$ be the Euclidean norm of row $i$ of $x^B$ along $s_B$.
Its constraint is
\[
 \sum_i\phi_i^{j_B}(r_i^B)\le p,\qquad
 \phi_i^j(t)=
 \begin{cases}t^2,&0\le t\le1,\\N_i^j(t^2),&t>1.\end{cases}
\]

\par\smallskip\noindent\textbf{1. Separate row lengths from row directions.}
The cutoff is one because that is the Rademacher row constraint.
For each row set
\[
 u_i^B=r_i^B\ind_{\{r_i^B>1\}},\qquad
 v^{B,0}_{\mathbf i_B}
       =x^B_{\mathbf i_B}\ind_{\{r_{i_{s_B}}^B\le1\}},
 \qquad
 v^{B,1}_{\mathbf i_B}=
 \begin{cases}
 x^B_{\mathbf i_B}/r_{i_{s_B}}^B,&r_{i_{s_B}}^B>1,\\
 0,&r_{i_{s_B}}^B\le1 .
 \end{cases}
\]
Then $x^B=v^{B,0}+u^Bv^{B,1}$, with $u^B$ acting only in
the distinguished coordinate. Both $v^{B,0}$ and $v^{B,1}$ belong
to $C^\varepsilon_{B,s_B}(p)$: their row norms are at most one,
and their squared Euclidean norms are at most $p$.
For $v^{B,1}$, the latter assertion follows from
\[
 \#\{i:r_i^B>1\}\le
                 \sum_{i:r_i^B>1}N_i^{j_B}((r_i^B)^2)\le p.
\]
The multipliers satisfy
\begin{equation}\label{eq:row-multipliers}
 u_i^B\in\{0\}\cup(1,\infty),\qquad
                    \sum_iN_i^{j_B}((u_i^B)^2)\le p.
\end{equation}

\par\smallskip\noindent\textbf{2. Check the tail constraint for each combined length.}
Expand the product over $B\in\mathcal Q$. A term selects
$u^Bv^{B,1}$ from some blocks $\mathcal S\subseteq\mathcal Q$
and $v^{B,0}$ from the others.
On the support of $\widetilde A$, the indices $i_{j,0}$ and $i_{j,1}$
are equal. Combine all selected multipliers with original coordinate
$j$ into $w_i^j$. There are at most two such multipliers: each of the
two copies of $j$ belongs to one block and can be distinguished only there.
Let $J\subseteq[d]$ consist of coordinates with at least one multiplier;
for the other coordinates set $w_i^j=1$.

For every $j\in J$,
\begin{equation}\label{eq:collapsed-budget}
                         \sum_i\widehat N_i^j(w_i^j)\le p.
\end{equation}
With one multiplier, this follows from $N_i^j(u_i)\le N_i^j(u_i^2)$.
With two, convexity and monotonicity give
\[
 N_i^j(u_iv_i)
 \le N_i^j((u_i^2+v_i^2)/2)
 \le\tfrac12N_i^j(u_i^2)+\tfrac12N_i^j(v_i^2).
\]
Every product is zero or greater than one, so
$\widehat N_i^j(w_i^j)=N_i^j(w_i^j)$, also at zero.
Now sum and use \eqref{eq:row-multipliers}.
This is an exact budget bound; no factor depending on $p$ is lost.

\par\smallskip\noindent\textbf{3. Remove the extra indices using random signs.}
Set $a^w_{\mathbf i}=a_{\mathbf i}\prod_{j\in J}w_{i_j}^j$.
The chosen term is a contraction of $\widetilde A^w$ against
Rademacher-admissible blocks $v^{B,0}$ or $v^{B,1}$.
The known lower bound \eqref{eq:known-lower}, in order $2d$, gives
\begin{align}\label{eq:bernoulli-collapse}
 |\text{term}|
 &\le\|\widetilde A^w\|^{\varepsilon}_{\mathcal Q,p;\mathbf s}
 \le L_{2d}
       \left\|\sum_{\mathbf i}a^w_{\mathbf i}
            \prod_{j=1}^d\varepsilon_{i_j}^{j,0}
                              \varepsilon_{i_j}^{j,1}\right\|_p
   =L_{2d}\|S_{A^w}(\eta)\|_p .
\end{align}
Here the superscript $\varepsilon$ denotes the functional with
$C^\varepsilon(p)$, and
$\eta_i^j=\varepsilon_i^{j,0}\varepsilon_i^{j,1}$
are independent Rademacher variables.
The lower bound applies to arbitrary real decoupled tensors, including
$\widetilde A^w$; neither its diagonal support nor its coefficient signs
require a change.

\par\smallskip\noindent\textbf{4. Absorb the multipliers into singleton blocks.}
Put $L=[d]\setminus J$. For fixed signs
$\sigma^j\in\{-1,1\}^n$, $j\in J$, define
\[
 C^{w,\sigma}_{\mathbf i_L}
      =\sum_{\mathbf i_J}a_{\mathbf i}
                      \prod_{j\in J}w_{i_j}^j\sigma_{i_j}^j.
\]
Condition on the signs in the coordinates carrying multipliers.
The conditional $p$th moment is at most its maximum over these signs,
so
\begin{equation}\label{eq:freezing}
 \|S_{A^w}(\eta)\|_p
       \le\max_{\sigma^J}\|S_{C^{w,\sigma}}(\eta^L)\|_p
       \le C_d \|A\|^{N}_{p}.
\end{equation}
For the second inequality, suppose first that $L\ne\varnothing$ and
apply the already proved Rademacher upper bound in order $|L|$.
By \eqref{eq:collapsed-budget}, each $w^j\sigma^j$ is admissible
in the original singleton body $\cB^N_{\{j\},j}(p)$.
Moreover,
\[
 C^\varepsilon_{I,s}(p)\subseteq\cB^N_{I,s}(p),
\]
because all its row lengths are at most one, where every modified
tail exponent equals the square. Consequently, for a
partition with distinguished indices $(\mathcal J,\mathbf t)$ of $L$,
\[
 \|C^{w,\sigma}\|^{\varepsilon}_{\mathcal J,p;\mathbf t}
 \le\|A\|^{N}_{\mathcal J\cup\{\{j\}:j\in J\},p;(\mathbf t,(j)_{j\in J})}.
\]
Their sum is a subsum of $\|A\|^{N}_{p}$, uniformly in $w,\sigma$.
If $L=\varnothing$, $C^{w,\sigma}$ is a scalar and is bounded in
absolute value by the all-singleton functional of $A$.
This proves \eqref{eq:freezing}.
The maximum over signs costs no cardinality factor, since every
choice is covered by the same deterministic budget.

There are $2^{|\mathcal Q|}\le2^{2d}$ expanded terms.
Equations \eqref{eq:bernoulli-collapse} and \eqref{eq:freezing}
bound each one uniformly in the original parameters $x^B$.
Take their supremum and sum over $\mathcal Q$ and its distinguished
indices. The number of these choices depends only on $d$, which
proves \eqref{eq:diagonal-collapse-proved}.
\end{proof}

\begin{proof}[Proof of Theorem~\ref{thm:main}]
Combine \eqref{eq:lifted-upper} with
Proposition~\ref{prop:diagonal-collapse} for the upper bound.
The lower bound is \eqref{eq:known-lower}.
\end{proof}

\section{Extension to regular moment growth}\label{sec:regular}

In this section assume \eqref{eq:regular-growth}; convexity of $N_i^j$
is no longer required. Fix
\[
 m=\max\{1,\lceil\log_2 K\rceil\},
\]
so that the variables belong to the class $\mathbb S(m)$ defined in \cite{Mel19}.
All constants in this section may depend on $d,m$, equivalently on $d,K$.
Chebyshev's inequality gives $N_i^j(t)\ge2+2\log t$ for $t>1$.
Thus the block sets remain compact and coordinatewise solid:
decreasing the absolute value of any entry preserves admissibility.
Their convex hulls have the same support functions.
This matters here because products of several row lengths will be
controlled in a convex hull, whereas the two-factor argument above
gave a point in the original set.

We first reduce to products of log-concave-tail variables, then control
their combined row lengths, and finally apply the reduction with random
signs. The lower bound is checked at the end of the section.

\subsection{Comparison with products}

By \cite[Lemma 3.2]{Mel19}, for each original coordinate there are
independent identically distributed symmetric variables
$Y_i^{j,1},\ldots,Y_i^{j,m}$ with log-concave tails.
Different original coordinates are assigned independent groups.
Writing
\[
 M_i^j(t)=-\log\PP(|Y_i^{j,1}|\ge t),
\]
these factors have the quantile normalization
\eqref{eq:normalization} and satisfy, with $C_0,t_0$ depending only on $m$,
\begin{equation}\label{eq:meller-tail-comparison}
 M_i^j(t)\le N_i^j(t^m)\le M_i^j(C_0t),\qquad t\ge t_0,
 \qquad c_m\le\E|Y_i^{j,1}|\le C_m .
\end{equation}
They can be coupled to $X_i^j$ so that
\[
 |X_i^j|\le C_m\left(1+\prod_{\nu=1}^m|Y_i^{j,\nu}|\right),\qquad
 \prod_{\nu=1}^m|Y_i^{j,\nu}|\le C_m(1+|X_i^j|).
\]
These are comparisons, not identities in distribution.
Applying \cite[Lemma 3.3]{Mel19} conditionally to each of the $d$ families gives
\begin{equation}\label{eq:regular-product}
 \|S_A(X)\|_p\asymp_{d,K}
 \left\|\sum_{\mathbf i}a_{\mathbf i}
                \prod_{j=1}^d\prod_{\nu=1}^mY_{i_j}^{j,\nu}\right\|_p.
\end{equation}
The additive ones in the coupling are removed by contraction and Jensen:
the means of the absolute values of both $X_i^j$ and the products
are bounded below by positive constants depending only on $m$.
Applying the scalar comparison conditionally is legitimate because the
coefficient array at each step is independent of the family being replaced.

Lift $A$ to the array $A^{[m]}$ on $[d]\times[m]$ by setting
\[
 a^{[m]}_{(i_{j,\nu})}
 =a_{i_{1,1},\ldots,i_{d,1}}
       \prod_{j=1}^d\prod_{\nu=2}^m
                         \ind_{\{i_{j,\nu}=i_{j,1}\}} .
\]
Use $M_i^j$ on every copy of coordinate $j$.
The already proved log-concave theorem and \eqref{eq:regular-product} imply
\begin{equation}\label{eq:regular-lifted}
 \|S_A(X)\|_p\le C_{d,K}
           \sum_{\mathcal Q\in\mathcal P_{md}}\|A^{[m]}\|_{\mathcal Q,p}^{M}.
\end{equation}

\subsection{Combining up to $m$ multipliers}

Here is the precise scalar step, adapted from the proof of
\cite[Lemma 3.9, equations (44)--(46)]{Mel19}.
Fix one family and suppress its superscript $j$.
Suppose $1\le q\le m$, $u_i^\nu\in\{0\}\cup(1,\infty)$, and
\[
 \sum_iM_i(u_i^\nu)\le p,\qquad \nu=1,\ldots,q,\qquad
                         w_i=\prod_{\nu=1}^q u_i^\nu .
\]
Let
\[
 B_X(p)=\{z:\sum_i\widehat N_i(z_i)\le p\}.
\]
We claim that
\begin{equation}\label{eq:regular-multiplier}
                     w\in C_m\,\operatorname{conv}B_X(p).
\end{equation}
Choose $C_0,t_0\ge1$ from \eqref{eq:meller-tail-comparison} and put
$T=C_0t_0$. Above $T$ we can transfer the factor's tail constraint
back to the original variable. Below $T$, a bound on the number
of nonzero coordinates will suffice.

The support of $w$ has cardinality at most $p$,
since it is contained in the nonzero support of each $u^\nu$
and $M_i(u_i^\nu)\ge u_i^\nu>1$ there.
Hence $\ind_{\{w_i>0\}}$ belongs to $B_X(p)$.
Also, for each $\nu$,
\[
 z_i^\nu=(u_i^\nu/C_0)^m\ind_{\{u_i^\nu>T\}}
\]
belongs to $B_X(p)$. Indeed, on $u_i^\nu>T$, substitute
$t=u_i^\nu/C_0>t_0$ in \eqref{eq:meller-tail-comparison} to get
\[
 \widehat N_i(z_i^\nu)=N_i((u_i^\nu/C_0)^m)
                         \le M_i(u_i^\nu).
\]
Sum over $i$ to obtain the required budget.
On the support of $w$, the arithmetic--geometric mean inequality and
$q\le m$ give
\begin{equation}\label{eq:regular-multiplier-majorant}
 w_i\le\frac1q\sum_{\nu=1}^q(u_i^\nu)^q
 \le T^m\ind_{\{w_i>0\}}+\frac{C_0^m}{q}\sum_{\nu=1}^qz_i^\nu .
\end{equation}
Off this support the last bound is automatic.
The right-hand side lies in
$(T^m+C_0^m)\operatorname{conv}B_X(p)$.
This convex hull is unconditional and coordinatewise solid, proving
\eqref{eq:regular-multiplier}. In particular, for every real vector $b$,
\[
             \sum_i |b_i|w_i\le C_m\|b\|_{X,p}.
\]
This support-function estimate is the form used below.
It permits each combined multiplier in a singleton block, up to a
constant depending only on $m$.

\subsection{Reduction of the lifted partition norms}

Fix a partition $\mathcal Q$ of $[d]\times[m]$, distinguished indices
$s_B=(j_B,\nu_B)$, and admissible parameters $x^B$ for
$\|A^{[m]}\|_{\mathcal Q,p;\mathbf s}^{M}$.
If $r_i^B$ is the Euclidean length of row $i$ of $x^B$, split the block
as in Proposition~\ref{prop:diagonal-collapse}:
\[
 x^B=v^{B,0}+u^Bv^{B,1},\qquad
 u_i^B=r_i^B\ind_{\{r_i^B>1\}},
\]
where $v^{B,0}$ consists of the rows of length at most one and
$v^{B,1}$ normalizes each remaining row to length one.
Both $v^{B,0},v^{B,1}$ are Rademacher-admissible:
\[
 \|R_{B,s_B}(v^{B,a})\|_\infty\le1,\qquad
 \|v^{B,a}\|_2^2\le p,\qquad a=0,1 .
\]
For the second tensor use
$\#\{i:r_i^B>1\}\le\sum_i\widehat M_i^{j_B}(r_i^B)\le p$.
Furthermore $\sum_iM_i^{j_B}(u_i^B)\le p$.

Expand the product over the blocks.
On the diagonal support of $A^{[m]}$, at most $m$ selected multipliers
belong to any one original coordinate $j$.
Let their product be $w^j$, and let $J\subseteq[d]$ denote the coordinates
with at least one selected multiplier. Equation
\eqref{eq:regular-multiplier} gives
\[
               w^j\in C_m\operatorname{conv}B_{X^j}(p),\qquad j\in J.
\]
For $a^w_{\mathbf i}=a_{\mathbf i}\prod_{j\in J}w_{i_j}^j$,
each expanded term is at most
\[
 \| (A^w)^{[m]}\|_{\mathcal Q,p;\mathbf s}^{\varepsilon}
 \le C_{md}\left\|
       \sum_{\mathbf i}a^w_{\mathbf i}
         \prod_{j=1}^d\prod_{\nu=1}^m\varepsilon_{i_j}^{j,\nu}\right\|_p
 =C_{md}\|S_{A^w}(\eta)\|_p .
\]
We used only the known Rademacher lower bound in order $md$.
The products $\eta_i^j=\prod_\nu\varepsilon_i^{j,\nu}$ are independent
random signs, so all extra indices have disappeared.

Freeze $\eta^j=\sigma^j$ for $j\in J$ and apply the Rademacher upper
bound to the remaining families. Exactly as in
\eqref{eq:freezing}, their block parameters are admissible in the
original $N$-bodies. The singleton vectors $w^j\sigma^j$ now belong
to $C_m\operatorname{conv}B_{X^j}(p)$.
A multilinear supremum is unchanged if an individual parameter set is
replaced by its convex hull. Thus every choice of the frozen signs is
bounded by
\[
             C_{d,K}\sum_{\mathcal J\in\mathcal P_d}\|A\|_{\mathcal J,p}^N.
\]
If no random families remain, the all-singleton term gives the same bound.
There is no cost for the number of frozen sign choices.
The number of expanded terms, lifted partitions and distinguished-index
choices depends only on $md$. Taking suprema and summing therefore gives
\begin{equation}\label{eq:regular-collapse}
 \sum_{\mathcal Q\in\mathcal P_{md}}\|A^{[m]}\|_{\mathcal Q,p}^{M}
 \le C_{d,K}\sum_{\mathcal J\in\mathcal P_d}\|A\|_{\mathcal J,p}^N.
\end{equation}
Together with \eqref{eq:regular-lifted}, this proves the upper bound
in Theorem~\ref{thm:regular}. We finish by verifying the lower bound.

\subsection{The lower bound}

We record why the lower-bound argument of \cite[Section 4]{AL12}
extends to the present distributions.
The one-dimensional estimate in \cite[Theorem 3.1]{Mel19} states that
\begin{equation}\label{eq:regular-linear}
 \left\|\sum_i b_iX_i^j\right\|_p
                         \asymp_K\|b\|_{X^j,p},\qquad p\ge2 .
\end{equation}
Moreover $\E|X_i^j|\ge\|X_i^j\|_2/K=1/(eK)$.
For any coefficient array $B$ of order $l$, symmetry, Jensen's inequality
applied to the absolute values, and contraction for signs give
\[
 \E|S_B(X)|\ge(eK)^{-l}\E|S_B(\varepsilon)|
                         \ge c_{l,K}\|B\|_2 .
\]
For completeness, the last comparison follows by iterating the
fourth-moment Khintchine inequality:
$\|S_B(\varepsilon)\|_4\le3^{l/2}\|B\|_2$ and
$\|S_B(\varepsilon)\|_2=\|B\|_2$; interpolation gives
$\|S_B(\varepsilon)\|_1\ge3^{-l}\|B\|_2$.

Fix an index $s$ of a block $I$ and a coefficient array
$b_{\mathbf i_I}$, and write
\[
 Z_{i_s}=\sum_{\mathbf i_{I\setminus\{s\}}}
            b_{\mathbf i_I}\prod_{j\in I\setminus\{s\}}X_{i_j}^j.
\]
The preceding estimate gives
$\E|Z_{i_s}|\ge c_{d,K}
             \|(b_{\mathbf i_I})_{\mathbf i_{I\setminus\{s\}}}\|_2$.
Condition on the other families, use symmetry in $X^s$, and apply
Jensen's inequality in the vector $(|Z_{i_s}|)_{i_s}$ to obtain
\begin{align*}
 \left\|\sum_{i_s}X_{i_s}^s Z_{i_s}\right\|_p
 &\ge\left\|\sum_{i_s}X_{i_s}^s\,\E|Z_{i_s}|\right\|_p\\
 &\ge c_{d,K}
   \left\|\left(\left\|(b_{\mathbf i_I})
            _{\mathbf i_{I\setminus\{s\}}}\right\|_2\right)_{i_s}
                                                        \right\|_{X^s,p}.
\end{align*}
The second line uses contraction and \eqref{eq:regular-linear}.
By rowwise Euclidean duality, its last norm is
$\sup_{x\in\cB^N_{I,s}(p)}\sum_{\mathbf i_I}b_{\mathbf i_I}x_{\mathbf i_I}$.
Apply this inequality successively to the blocks of a partition,
conditioning on the remaining families at each step. For every fixed
choice of admissible block parameters this gives
\[
 \left|\sum_{\mathbf i}a_{\mathbf i}
                        \prod_{I\in\mathcal J}x^I_{\mathbf i_I}\right|
                                      \le C_{d,K}\|S_A(X)\|_p.
\]
Taking the supremum and summing the finitely many choices proves
\begin{equation}\label{eq:regular-lower}
             \sum_{\mathcal J\in\mathcal P_d}\|A\|_{\mathcal J,p}^N
                                      \le C_{d,K}\|S_A(X)\|_p .
\end{equation}
Only the scalar estimate and a lower comparison of the first and second
moments of fixed-order chaoses have been used.

Combining \eqref{eq:regular-lower} with the upper bound proves
Theorem~\ref{thm:regular}.

\section{Reading the Weibull and Rademacher formulas}
\label{sec:examples}

We first verify the parameter sets in Corollaries~\ref{thm:rademacher}
and \ref{thm:weibull}. For Rademacher variables, $N(t)=0$ on $[0,1]$
and $N(t)=+\infty$ for $t>1$. Hence $\cB^N_{I,s}(p)$ is exactly
$C^\varepsilon_{I,s}(p)$.
For a Weibull tail, the modified exponent is
\[
 \phi_r(t)=
 \begin{cases}t^2,&|t|\le1,\\ |t|^r,&|t|>1.\end{cases}
\]
Let $D^{(r)}_{I,s}(p)$ be the body defined by
$\sum_{i_s}\phi_r(R_{I,s}(x)_{i_s})\le p$.

If $1\le r\le2$, splitting the rows according to whether their norm
is at most one gives
\[
 D^{(r)}_{I,s}(p)\subseteq
          \sqrt p B_2^I+p^{1/r}B_{r,2}^{I,s}.
\]
Conversely each summand on the right is contained in $D^{(r)}_{I,s}(p)$,
because $\phi_r(t)\le t^2$ and $\phi_r(t)\le |t|^r$.
For a multilinear functional, expansion of the sum in each block
therefore changes its supremum by at most $2^{|\mathcal J|}$.
If $r\ge2$, $\phi_r(t)=\max(t^2,|t|^r)$, and
\begin{equation}\label{eq:intersection-comparison}
 D^{(r)}_{I,s}(p)\subseteq
       \sqrt p B_2^I\cap p^{1/r}B_{r,2}^{I,s}
       \subseteq \sqrt2\,D^{(r)}_{I,s}(p).
\end{equation}
Indeed, the sum of the maxima is at most the sum of the two budgets,
and $\phi_r(t/\sqrt2)\le\phi_r(t)/2$.
These comparisons and Theorem~\ref{thm:main} prove the Rademacher formula
and the Weibull formula
with constants uniform in $r$.

\subsection{Mixed norms when $1\le r\le2$}

Each block in \eqref{eq:weibull-balls} offers two choices:
a Euclidean ball with scale $p^{1/2}$, or a mixed ball with scale
$p^{1/r}$. If a partition has $k$ blocks and $h$ use the mixed ball,
the resulting power is $p^{(k-h)/2+h/r}$.
We now define the corresponding norms, which are independent of $p$.

For a partition $\mathcal J$ and a subfamily $\mathcal H\subseteq\mathcal J$,
choose $s_I\in I$ only for $I\in\mathcal H$, and define
\begin{equation}\label{eq:mixed-norm}
 \|A\|_{\mathcal J,\mathcal H,\mathbf s;r}
 =\sup\left\{\sum_{\mathbf i}a_{\mathbf i}
                          \prod_{I\in\mathcal J}x^I_{\mathbf i_I}:
 \begin{array}{ll}
 \|x^I\|_2\le1,&I\notin\mathcal H,\\
 \|x^I\|_{r,2;s_I}\le1,&I\in\mathcal H
 \end{array}\right\}.
\end{equation}
For $1\le k\le d$ and $0\le h\le k$, let
\[
 \|A\|_{k,h}^{(r)}=
 \sum_{\substack{\mathcal J\in\mathcal P_d\\|\mathcal J|=k}}
 \ \sum_{\substack{\mathcal H\subseteq\mathcal J\\|\mathcal H|=h}}
 \ \sum_{\mathbf s\in\prod_{I\in\mathcal H}I}
           \|A\|_{\mathcal J,\mathcal H,\mathbf s;r}.
\]
The inner sum has one term when $\mathcal H=\varnothing$.
These sums of mixed norms do not depend on $p$.

\begin{corollary}\label{cor:fourteen}
For the variables in Corollary~\ref{thm:weibull}, if $1\le r\le2$, then
\begin{equation}\label{eq:weibull-polynomial}
 \|S_A(X)\|_p\asymp_d
 \sum_{k=1}^d\sum_{h=0}^k
                  p^{(k-h)/2+h/r}\|A\|_{k,h}^{(r)}.
\end{equation}
\end{corollary}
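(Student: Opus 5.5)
The plan is to start from Corollary~\ref{thm:weibull} for $1\le r\le2$. It gives $\|S_A(X)\|_p\asymp_d\sum_{\mathcal J}\|A\|^{(r)}_{\mathcal J,p}$ with the block sets $C^{(r)}_{I,s}(p)=\sqrt pB_2^I+p^{1/r}B_{r,2}^{I,s}$. We then expand each partition functional into the $p$-free mixed norms \eqref{eq:mixed-norm}. Fix $\mathcal J$ with $|\mathcal J|=k$ and distinguished indices $\mathbf s=(s_I)_{I\in\mathcal J}$. Write each admissible parameter as $x^I=\sqrt p\,y^I+p^{1/r}z^I$ with $\|y^I\|_2\le1$ and $\|z^I\|_{r,2;s_I}\le1$.

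\textbf{Upper bound.} By multilinearity the contraction splits into $2^k$ terms. Each term is indexed by the subfamily $\mathcal H\subseteq\mathcal J$ of blocks where $z^I$ was chosen. Such a term equals $p^{(k-h)/2+h/r}$ times a contraction against parameters admissible for $\|A\|_{\mathcal J,\mathcal H,\mathbf s|_{\mathcal H};r}$, where $h=|\mathcal H|$. Taking suprema shows
\[
\|A\|^{(r)}_{\mathcal J,p;\mathbf s}\le\sum_{\mathcal H\subseteq\mathcal J}p^{(k-h)/2+h/r}\|A\|_{\mathcal J,\mathcal H,\mathbf s|_{\mathcal H};r}.
\]
Next sum over $\mathbf s\in\prod_{I\in\mathcal J}I$. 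The term for $\mathcal H$ depends only on $\mathbf s|_{\mathcal H}$, so each mixed norm is counted $\prod_{I\notin\mathcal H}|I|\le d^d$ times. Summing over $\mathcal J$ then yields the upper half of \eqref{eq:weibull-polynomial}.

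\textbf{Lower bound.} Fix $\mathcal J$, $\mathcal H$ and $\mathbf s_{\mathcal H}$. Extend $\mathbf s_{\mathcal H}$ arbitrarily to $\mathbf s$, choosing for instance the smallest element of each block outside $\mathcal H$. In the blocks $I\notin\mathcal H$, use the parameters $\sqrt p\,y^I\in C^{(r)}_{I,s_I}(p)$. In the blocks $I\in\mathcal H$, use $p^{1/r}z^I\in C^{(r)}_{I,s_I}(p)$. This gives
\[
p^{(k-h)/2+h/r}\|A\|_{\mathcal J,\mathcal H,\mathbf s_{\mathcal H};r}\le\|A\|^{(r)}_{\mathcal J,p;\mathbf s}.
\]
Summing over all triples bounds the right side of \eqref{eq:weibull-polynomial} by a constant depending only on $d$ times $\sum_{\mathcal J}\|A\|^{(r)}_{\mathcal J,p}$. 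The number of triples mapping to the same $(\mathcal J,\mathbf s)$ is at most $2^d$.

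\textbf{Main obstacle.} The one point needing care is bookkeeping rather than analysis: the distinguished index is meaningless on Euclidean blocks. It must be summed out in the upper bound and chosen arbitrarily in the lower bound, with combinatorial factors depending only on $d$. Uniformity in $r\in[1,2]$ is inherited directly from Corollary~\ref{thm:weibull}, since the expansion constants do not involve $r$.
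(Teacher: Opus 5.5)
Your proposal is correct and is essentially the paper's proof. You expand each block's Minkowski sum by multilinearity, bound the supremum above by the sum of the $2^k$ expanded suprema and below by each one, and absorb the repeated distinguished-index choices on Euclidean blocks into constants depending only on $d$. One small inaccuracy: at $r=2$, Corollary~\ref{thm:weibull} uses the intersection, which equals $\sqrt p B_2^I$ rather than the sum $2\sqrt p B_2^I$ (the paper pays an extra $2^d$ for this). Your argument, however, only uses $\sqrt pB_2^I\subseteq C^{(r)}_{I,s}(p)$, $p^{1/r}B_{r,2}^{I,s}\subseteq C^{(r)}_{I,s}(p)$ and $C^{(r)}_{I,s}(p)\subseteq\sqrt pB_2^I+p^{1/r}B_{r,2}^{I,s}$, and these hold at $r=2$ as well, so nothing breaks.
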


\begin{proof}
For $1\le r<2$, expand each Minkowski sum in \eqref{eq:weibull-balls}.
At $r=2$, first replace $\sqrt p B_2^I$ by $2\sqrt p B_2^I$,
which changes the comparison by at most $2^d$, and use the same expansion.
The supremum is at most the sum of the suprema of all expanded terms
and is at least each such supremum, so these two quantities differ
by at most $2^d$. A block using the Euclidean ball contributes
$p^{1/2}$, and a block using the mixed ball contributes $p^{1/r}$.
The choices of distinguished indices in Euclidean blocks merely
repeat identical terms, with multiplicities depending only on $d$.
Removing these repetitions gives \eqref{eq:weibull-polynomial}.
\end{proof}

The two one-block terms already show the distinction.
Writing $q=r/(r-1)$, with $q=\infty$ at $r=1$, we have
\[
 \|A\|_{1,0}^{(r)}=\|A\|_2,\qquad
 \|A\|_{1,1}^{(r)}=\sum_{s=1}^d
                \left\|R_{[d],s}(A)\right\|_q.
\]
At the other extreme, all blocks are singletons and all constraints
are of $\ell_r$ type:
\[
 \|A\|_{d,d}^{(r)}=
 \sup_{\|x^j\|_r\le1}\sum_{\mathbf i}a_{\mathbf i}\prod_{j=1}^d x^j_{i_j}.
\]
\paragraph{Order four.}
There are $k+1$ choices for the number $h$ of mixed blocks in a
partition with $k$ blocks. For $k=1,2,3,4$, this gives fourteen
terms in \eqref{eq:weibull-polynomial}:
\begin{align}\label{eq:fourteen}
 \|S_A(X)\|_p\asymp_4{}&
 \sqrt p\,\|A\|_{1,0}^{(r)}+p^{1/r}\|A\|_{1,1}^{(r)}\nonumber\\
 &+p\|A\|_{2,0}^{(r)}+p^{1/2+1/r}\|A\|_{2,1}^{(r)}+p^{2/r}\|A\|_{2,2}^{(r)}\nonumber\\
 &+p^{3/2}\|A\|_{3,0}^{(r)}+p^{1+1/r}\|A\|_{3,1}^{(r)}\nonumber\\
 &\hspace{16mm}+p^{1/2+2/r}\|A\|_{3,2}^{(r)}+p^{3/r}\|A\|_{3,3}^{(r)}\nonumber\\
 &+p^2\|A\|_{4,0}^{(r)}+p^{3/2+1/r}\|A\|_{4,1}^{(r)}
       +p^{1+2/r}\|A\|_{4,2}^{(r)}\nonumber\\
 &\hspace{16mm}+p^{1/2+3/r}\|A\|_{4,3}^{(r)}+p^{4/r}\|A\|_{4,4}^{(r)}.
\end{align}

The middle expressions in \eqref{eq:fourteen} retain genuine mixed
tensor norms; \eqref{eq:fourteen} does not replace them by entrywise norms.
For order four the underlying partitions are exactly
\[
\begin{array}{c|c|c}
\text{block sizes}&\text{partitions}&
               \text{choices of all distinguished indices}\\ \hline
4&1&4\\
3+1&4&12\\
2+2&3&12\\
2+1+1&6&12\\
1+1+1+1&1&1
\end{array}
\]
Thus there are fifteen partitions and forty-one choices including the distinguished indices.

\subsection{Row sorting for $r>2$ and for random signs}

An intersection of balls does not split into a sum of tensor norms.
We can still simplify one block at a time: take Euclidean norms along
its rows, then sort the resulting vector.
The cutoff at $\lfloor p\rfloor$ coordinates separates the part
controlled by the $\ell_r$ constraint from the Euclidean remainder.

For a vector $b\in\R^n$, define
\[
 \|b\|_{r,p}=\sup\{\langle b,u\rangle:
                         \|u\|_2\le\sqrt p,\ \|u\|_r\le p^{1/r}\},
 \qquad 2\le r\le\infty,
\]
where $p^{1/\infty}=1$.
For $1\le r<2$, it is convenient to set instead
\[
 \|b\|_{r,p}=\sqrt p\,\|b\|_2+p^{1/r}\|b\|_{r/(r-1)},
\]
where $r/(r-1)=\infty$ at $r=1$.
These are precisely the support functions of the singleton sets
$C^{(r)}(p)$, including the Rademacher case $r=\infty$.

\begin{lemma}\label{lem:sorting}
Let $2\le r\le\infty$, $q=r/(r-1)$, with $q=1$ if $r=\infty$,
and $m=\min(n,\lfloor p\rfloor)$. Write
$b_1^*\ge\cdots\ge b_n^*$ for the decreasing rearrangement of $(|b_i|)$.
Then, with absolute constants,
\begin{equation}\label{eq:sorting}
 \|b\|_{r,p}\asymp
 p^{1/r}\left(\sum_{i\le m}(b_i^*)^q\right)^{1/q}
       +\sqrt p\left(\sum_{i>m}(b_i^*)^2\right)^{1/2}.
\end{equation}
\end{lemma}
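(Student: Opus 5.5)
We may assume $b=b^*$ is nonnegative and decreasing, since both sides of \eqref{eq:sorting} are invariant under permutations and sign changes, and the supremum defining $\|b\|_{r,p}$ is attained at some $u\ge0$. Write $P=p^{1/r}\bigl(\sum_{i\le m}b_i^q\bigr)^{1/q}$ and $E=\sqrt p\bigl(\sum_{i>m}b_i^2\bigr)^{1/2}$. The plan is to prove $\|b\|_{r,p}\ge c\max(P,E)$ by exhibiting two explicit test vectors $u$, and $\|b\|_{r,p}\le C(P+E)$ by splitting an arbitrary admissible $u$ into its first $m$ and remaining coordinates.

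\emph{Upper bound.} Let $u\ge0$ satisfy $\|u\|_2\le\sqrt p$ and $\|u\|_r\le p^{1/r}$. Then
\[
 \langle b,u\rangle=\sum_{i\le m}b_iu_i+\sum_{i>m}b_iu_i .
\]
By H\"older with exponents $(q,r)$, the first sum is at most $P$. By Cauchy--Schwarz, the second is at most $E$. When $r=\infty$ the first sum is at most $\sum_{i\le m}b_i$, which is again $P$ since $q=1$ and $p^{1/\infty}=1$. This gives $\|b\|_{r,p}\le P+E$, with constant one.

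\emph{Lower bound for $P$.} Take $u_i=\lambda b_i^{q-1}$ for $i\le m$ and $u_i=0$ otherwise, with $\lambda$ chosen so that $\|u\|_r=p^{1/r}$; this is the H\"older extremal vector. It remains to check $\|u\|_2\le\sqrt p$. The vector $u$ has at most $m\le p$ nonzero coordinates and $r\ge2$, so H\"older on the support gives $\|u\|_2\le m^{1/2-1/r}\|u\|_r\le p^{1/2-1/r}p^{1/r}=\sqrt p$. Hence $\|b\|_{r,p}\ge P$.

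\emph{Lower bound for $E$.} This is the step needing care. Take $u_i=\sqrt p\,b_i/\|(b_j)_{j>m}\|_2$ for $i>m$ and $u_i=0$ otherwise, so that $\|u\|_2=\sqrt p$ and $\langle b,u\rangle=E$. We must bound $\|u\|_r$. Since $r\ge2$, $\|u\|_r^r\le\|u\|_\infty^{r-2}\|u\|_2^2$, so it suffices to show $\|u\|_\infty\le C$. Monotonicity gives $b_i\le b_m$ for $i>m$, so $\|u\|_\infty\le\sqrt p\,b_m/\|(b_j)_{j>m}\|_2$, but this ratio can be large. I will not force $u$ to be admissible. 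Instead I will truncate: set $u'_i=\min(u_i,1)$ on $i>m$. Then $\|u'\|_r^r\le\|u'\|_2^2\le p$, so $u'$ is admissible.

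\emph{Case analysis for the truncation.} If at least half of $\sum_{i>m}b_iu_i$ comes from coordinates with $u_i\le1$, then $\langle b,u'\rangle\ge E/2$. Otherwise the coordinates with $u_i>1$, which form an initial segment of $\{i>m\}$ by monotonicity, carry at least $E/2$. Call this segment $S$, and note $|S|\le\|u\|_2^2\le p$. On $S$ every $b_i\le b_m$, so
\[
 E/2\le\sum_{i\in S}b_iu_i\le\sqrt p\Bigl(\sum_{i\in S}b_i^2\Bigr)^{1/2}\le p\,b_m .
\]
Now compare with $P$, using $m\ge p/2$ when $m=\lfloor p\rfloor$ and $p\ge1$. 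Since $b_i\ge b_m$ for $i\le m$, we get $P\ge p^{1/r}m^{1/q}b_m\ge c\,p^{1/r+1/q}b_m=c\,p\,b_m$, because $1/r+1/q=1$. Hence $E\le CP$ in this case.

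The remaining case is $m=n<\lfloor p\rfloor$. Then the tail $\{i>m\}$ is empty and $E=0$, so there is nothing to prove. Combining the cases yields $\|b\|_{r,p}\ge c\max(P,E)$, which completes \eqref{eq:sorting}. The case $r=\infty$ runs identically, with $\|u\|_\infty\le1$ playing the role of the $\ell_r$ constraint.
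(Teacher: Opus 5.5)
Your proof is correct and follows essentially the same route as the paper. You use the H\"older/Cauchy--Schwarz split at $m$ for the upper bound, the H\"older extremal vector with the $m^{1/2-1/r}$ check for the head term, and the normalized tail vector for the Euclidean term, handling the bad case through $E\le 2p\,b_m^*\le C P$ via $p/m\le 2$ and $1/r+1/q=1$. The paper is slightly leaner: it splits directly on whether $\sqrt p\,b_m^*\le\|(b_j^*)_{j>m}\|_2$, which is exactly your condition $\|u\|_\infty\le1$, so no truncation is needed. In the other case it gets $E<p\,b_m^*$ immediately, with no mass-splitting step.
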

\begin{proof}
Denote the two terms on the right by $T$ and $R$.
H\"older's inequality on the first $m$ coordinates and
Cauchy--Schwarz on the rest give $\|b\|_{r,p}\le T+R$.
An optimizer for $T$ on the $\ell_r$ ball, supported on those $m$
coordinates, has Euclidean norm at most
$m^{1/2-1/r}p^{1/r}\le\sqrt p$. Consequently $\|b\|_{r,p}\ge T$.

If $m=n$, this finishes the proof. Otherwise let
$a=(\sum_{i>m}(b_i^*)^2)^{1/2}$.
When $a\ge\sqrt p\,b_m^*$ and $a>0$, the tail vector
$u_i=\sqrt p\,b_i^*/a$ has all coordinates at most one.
It has squared Euclidean norm $p$ and $\ell_r$ budget at most $p$
(for $r=\infty$, its maximum is at most one). Thus $\|b\|_{r,p}\ge R$.
If $a<\sqrt p\,b_m^*$, then
$R<pb_m^*\le2p^{1/r}m^{1/q}b_m^*\le2T$,
because $p/m\le2$. The zero-tail case is immediate.
In all cases $T+R\le3\|b\|_{r,p}$.
\end{proof}

For signs, \eqref{eq:sorting} becomes the explicit head--tail expression
\[
 \|b\|_{\infty,p}\asymp
       \sum_{i\le m}b_i^*
       +\sqrt p\left(\sum_{i>m}(b_i^*)^2\right)^{1/2}.
\]
Thus the cutoff involves sorted row lengths when a tensor block is
eliminated, rather than sorted entries of the full tensor.

For $d=4$, use vertical bars to separate blocks, so that, for example,
$\|A\|^{(r)}_{123|4,p}$ denotes the norm of the partition
$\{\{1,2,3\},\{4\}\}$.
Let $\mathfrak P_{22}$ be the three unordered partitions of $[4]$ into
two pairs. Then \eqref{eq:weibull-moments} reads
\begin{align}\label{eq:five-types}
 \|S_A(X)\|_p\asymp_4{}&
 \|A\|^{(r)}_{1234,p}
 +\sum_{\ell=1}^4\|A\|^{(r)}_{([4]\setminus\{\ell\})|\ell,p}
 +\sum_{\{I,J\}\in\mathfrak P_{22}}\|A\|^{(r)}_{I|J,p}\nonumber\\
 &+\sum_{\substack{I\subset[4]\\|I|=2}}
              \|A\|^{(r)}_{\{I\}\cup\{\{j\}:j\notin I\},p}
 +\|A\|^{(r)}_{1|2|3|4,p}.
\end{align}
The fourth term uses the pair $I$ and the two singleton blocks in $I^c$.
For random signs replace $(r)$ by $\varepsilon$.

Each block can be eliminated by rowwise Euclidean duality and the
vector norm $\|b\|_{r,p}$ above. For example, one of the three summands
of $\|A\|^{(r)}_{123|4,p}$ is
\[
 \sup_{z\in C^{(r)}_{\{4\},4}(p)}
 \left\|\left(
       \left[\sum_{j,k}\left(\sum_l a_{ijkl}z_l\right)^2\right]^{1/2}
                        \right)_i\right\|_{r,p}.
\]
The other two distinguish $j$ and $k$ instead of $i$.
Substituting \eqref{eq:sorting} gives a sum of the largest row lengths
in $\ell_q$ and an $\ell_2$ remainder. This simplification retains the
intersection constraint and all tensor indices.

One cannot obtain the $r>2$ formula by simply adding separate Gaussian
and $\ell_r$ contributions as in \eqref{eq:fourteen}.
Already for a single nonzero coefficient and $p\ge2$, the intersection
support is $p^{1/r}$ whereas the sum support is
$\sqrt p+p^{1/r}$. Their ratio is unbounded as $p$ increases.
For signs the corresponding values are $1$ and $\sqrt p+1$.
The intersection constraint is therefore essential.

\section{Tail bounds and nondecoupled chaoses}

\subsection{Tail probabilities}
The moment formula specifies the size of a deviation with probability
of exponential order $e^{-p}$. Markov's inequality gives the upper tail;
comparison of the $p$th and $2p$th moments gives the matching lower scale.

\begin{corollary}\label{cor:tails}
Under the assumptions of Theorem~\ref{thm:main}, if $A\ne0$,
there are positive constants $c_d,C_d$ such that, for $p\ge2$,
\begin{equation}\label{eq:tail-bounds}
 \PP\{|S_A(X)|\ge C_d\|A\|_p^N\}\le e^{-p},
 \qquad
 \PP\{|S_A(X)|\ge c_d\|A\|_p^N\}\ge e^{-C_dp}.
\end{equation}
\end{corollary}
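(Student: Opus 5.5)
The upper tail follows from Markov's inequality applied to the $p$th moment. By Theorem~\ref{thm:main} there is $C'_d$ with $\|S_A(X)\|_p\le C'_d\|A\|_p^N$. Taking $C_d=eC'_d$ gives
\[
 \PP\{|S_A(X)|\ge C_d\|A\|_p^N\}\le\frac{\E|S_A(X)|^p}{(eC'_d\|A\|_p^N)^p}\le e^{-p}.
\]

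For the lower tail I would use the Paley--Zygmund inequality in its moment form. For $q\ge1$ and $\theta\in(0,1)$,
\[
 \PP\{|S|\ge\theta\|S\|_q\}\ge\Bigl((1-\theta^q)\frac{\|S\|_q^q}{\|S\|_{2q}^q}\Bigr)^2,
\]
which follows from $\E|S|^q\le\theta^q\E|S|^q+\E\bigl(|S|^q\ind_{\{|S|\ge\theta\|S\|_q\}}\bigr)$ and Cauchy--Schwarz. Apply it with $S=S_A(X)$ and $q=p$. Theorem~\ref{thm:main} at $p$ and $2p$ gives $\|S\|_{2p}\le C'_d\|A\|_{2p}^N$ and $\|S\|_p\ge c'_d\|A\|_p^N$.

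The main step is a doubling estimate for the deterministic quantity, namely $\|A\|_{2p}^N\le 2^d\|A\|_p^N$. I would prove it blockwise. If $x$ lies in $\cB^N_{I,s}(2p)$, then $x/2\in\cB^N_{I,s}(p)$. To see this, note that $\widehat N$ is convex with $\widehat N(0)=0$, hence $\widehat N(t/2)\le\widehat N(t)/2$; on $|t|\le1$ it equals $t^2$, and gluing at $1$ preserves convexity because $N(1)=1$ and the right slope there is at least $2$ when $N$ is finite (in the Rademacher-type infinite case this is trivial). Summing over rows, the budget $2p$ becomes $p$. Multilinearity then costs at most $2^{|\mathcal J|}\le2^d$ per partition norm.

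With this, $\|S\|_p/\|S\|_{2p}\ge c'_d/(2^dC'_d)=:\kappa_d$. Take $\theta=1/2$ and $c_d=c'_d/2$. Then
\[
 \PP\{|S|\ge c_d\|A\|_p^N\}\ge\PP\{|S|\ge\tfrac12\|S\|_p\}\ge\bigl(\tfrac12\kappa_d^{p}\bigr)^2\ge e^{-C_dp}
\]
for a suitable $C_d$ and all $p\ge2$. The constant $C_d$ in the two inequalities can be taken as the maximum of the two. The hypothesis $A\ne0$ only ensures $\|A\|_p^N>0$, so that the events are nontrivial; it does not enter the argument otherwise.
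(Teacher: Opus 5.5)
Your proof follows the paper's route: Markov's inequality for the upper tail, the doubling bound $\|A\|^N_{2p}\le 2^d\|A\|^N_p$ obtained by rescaling each block, and Paley--Zygmund applied to $|S_A(X)|^p$ with $\theta=1/2$. One step, however, is justified by a false claim: the scaling inequality $\widehat N(t/2)\le\widehat N(t)/2$.

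The function $\widehat N$ is \emph{not} convex in general. Convexity of $N$ together with $N(0)=0$ and $N(1)=1$ only forces the right slope of $N$ at $1$ to be at least $1$, not $2$. The symmetric exponential variable, $N(t)=t$, satisfies the normalization \eqref{eq:normalization}. Its $\widehat N$ equals $t^2$ on $[0,1]$ and $t$ beyond, so the slope drops from $2$ to $1$ at $t=1$. You therefore cannot derive the scaling inequality from ``$\widehat N$ convex with $\widehat N(0)=0$''.

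The inequality is nevertheless true. It follows from a case split, which is what the paper does for general $a\ge1$:
\begin{itemize}
\item If $t\le1$, then $\widehat N(t/2)=t^2/4\le t^2/2$.
\item If $t>1\ge t/2$, then $(t/2)^2\le t/2\le N(t)/2$, using $N(t)\ge t$ for $t>1$ from \eqref{eq:tail-basic}.
\item If $t/2>1$, then $N(t/2)\le N(t)/2$ by convexity of $N$ and $N(0)=0$.
\end{itemize}

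With this replacement the rest of your argument goes through as written: the blockwise rescaling, the ratio bound $\|S\|_p/\|S\|_{2p}\ge c'_d/(2^dC'_d)$, and the final choice of constants.
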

\begin{proof}
The upper bound follows from Markov's inequality and
Theorem~\ref{thm:main}, increasing its upper constant by a factor $e$.
For $a\ge1$, convexity of $N$ and \eqref{eq:tail-basic} imply
$\widehat N(t/a)\le\widehat N(t)/a$.
When $t>1$ and $t/a\le1$, for instance,
$(t/a)^2\le t/a\le N(t)/a$; the other cases follow from the
quadratic formula or convexity. Rescaling each block therefore gives
\begin{equation}\label{eq:moment-scaling}
                  \|A\|^{N}_{ap}\le a^d\|A\|_p^N.
\end{equation}
Thus $\|S_A(X)\|_{2p}\le K_d\|S_A(X)\|_p$.
Paley--Zygmund applied to $|S_A(X)|^p$ yields
\[
 \PP\{|S_A(X)|\ge\tfrac12\|S_A(X)\|_p\}
 \ge(1-2^{-p})^2
       \left(\frac{\|S_A(X)\|_p}{\|S_A(X)\|_{2p}}\right)^{2p}
 \ge e^{-C_dp}.
\]
Use the lower moment bound to obtain the second assertion.
The norm is nonzero for $A\ne0$, since the independent centered
families have positive variances and the distinct tensor monomials
are orthogonal in $L_2$.
\end{proof}

The same tail conclusions hold under Theorem~\ref{thm:regular}, with
constants depending on $d,K$. Indeed,
\cite[Lemma 3.10]{Mel19} gives
$\|b\|_{X,ap}\le C_Ka^m\|b\|_{X,p}$ for $a\ge1$.
Rowwise Euclidean duality transfers this support-function comparison
to the convex hulls of the block bodies. Applying it once per block yields
\[
 \|A\|_{ap}^N\le C_{d,K}a^{md}\|A\|_p^N.
\]
Theorem~\ref{thm:regular} therefore gives bounded doubling of the chaos
moments, and the same Markov and Paley--Zygmund argument applies.

\subsection{Returning to one family of variables}
Suppose now that $X_1,\ldots,X_n$ are independent normalized symmetric
variables with log-concave tails, and $A$ is invariant under permutations
of its $d$ indices and vanishes whenever two indices coincide.
The decoupling inequality of de la Pe\~na and Montgomery-Smith
\cite{dPMS95}, in the form stated in \cite[Theorem 1.1]{AL12},
compares
\[
 \left\|\sum_{\mathbf i}a_{\mathbf i}X_{i_1}\cdots X_{i_d}\right\|_p
 \quad\text{and}\quad
 \left\|\sum_{\mathbf i}a_{\mathbf i}
                 X_{i_1}^1\cdots X_{i_d}^d\right\|_p
\]
in both directions with constants depending only on $d$,
where the families on the right are independent copies.
Consequently \eqref{eq:general-natural-moment-formula} and
\eqref{eq:tail-bounds} hold also for these nondecoupled tetrahedral
chaoses, with $N_i^j=N_i$ and adjusted order-dependent constants.
The same decoupling argument applies under
Theorem~\ref{thm:regular}, with constants depending on $d,K$.
The diagonal-vanishing assumption is needed for this decoupling
consequence; the decoupled theorems themselves allow arbitrary arrays.

\end{document}